\newcommand{\C}{\mathbb{C}}
\newcommand{\N}{\mathbb{N}}
\newcommand{\R}{\mathbb{R}}
\newcommand{\ca}{\mathcal{A}}
\newcommand{\ce}{\mathcal{E}}
\newcommand{\ch}{\mathcal{H}}
\newcommand{\cu}{\mathcal{U}}
\DeclareMathOperator{\Tr}{Tr}
\def\rmb{\mathrm{b}}
\def\rmc{\mathrm{c}}
\def\dd{\mathrm{d}}
\def\d{\mathrm{d}}
\def\rme{\mathrm{e}}
\def\e{\mathrm{e}}
\def\rmi{\mathrm{i}}
\def\i{\mathrm{i}}
\def\veps{\varepsilon}
\def\vphi{\varphi}
\renewcommand{\proof}{\noindent{\bf Proof. }}
\def\bra#1{\langle{#1}|}
\def\ket#1{|{#1}\rangle}
\def\braket#1#2{\langle{#1}|{#2}\rangle}
\def\jap#1{\langle {#1} \rangle}
\def\what{\widehat}
\def\what#1{\widehat{ #1\,}}
\def\slim{\mbox{\rm s-}\!\lim}
\def\nin{\notin}
\newcommand{\supp}{\mathop{\mathrm{supp}}}
\def\nin{\notin}
\def\qed{\hfill $\Box$\medskip}
\long\def\symbolfootnote[#1]#2{\begingroup%
\def\thefootnote{\fnsymbol{footnote}}\footnote[#1]{#2}\endgroup}
\newtheorem{theorem}{Theorem}[section]
\newtheorem{lemma}[theorem]{Lemma}
\newtheorem{proposition}[theorem]{Proposition}
\newtheorem{corollary}[theorem]{Corollary}
\newtheorem{remark}[theorem]{Remark}
\newtheorem{example}[theorem]{Example}
\newtheorem*{theorem*}{Theorem}
\newcommand{\labitem}[2]{%
\def\@itemlabel{\textit{#1}.}
\item
\def\@currentlabel{#1}\label{#2}}
\newcommand{\labitemi}[2]{%
\def\@itemlabel{\textit{#1}}
\item
\def\@currentlabel{#1}\label{#2}}
\begin{document}

\title[Abstract theory of pointwise decay]{Abstract theory of
  pointwise decay with applications to Wave and Schr\"odinger
  equations
}
\author[V. Georgescu]{Vladimir Georgescu$^1$}
\address{(1) V. Georgescu,
  D\'epartement de Math\'ematiques, Universit\'e de Cergy-Pontoise,
  95000 Cergy-Pontoise, France}
\email{vladimir.georgescu@math.cnrs.fr}
\author[M. Larenas]{Manuel Larenas$^2$}
\thanks{The second author was partially supported by NSF DMS-1201394. We thank the referees for valuable comments which helped to improve the manuscript.}
\author[A. Soffer]{Avy Soffer$^2$}
\address{\noindent(2) Rutgers University, Department of Mathematics, 110 Freylinghuysen Road, Piscataway, NJ 08854, U.S.A.}
\email{mlarenas@math.rutgers.edu, soffer@math.rutgers.edu}
\date{}
\begin{abstract}
 We prove pointwise in time decay estimates  via an abstract conjugate
operator method. This is then applied to a large class of  dispersive equations.
\end{abstract}

\maketitle
\setcounter{secnumdepth}{2}
\setcounter{tocdepth}{2} 
\tableofcontents{}

\section{Introduction}

In the study of dispersive equations, linear or nonlinear, one is
faced with the need to quantitatively estimate the decay rate of the
solution in various norms.  The known estimates which play a central
role in the theory of dispersive equations include local decay
estimates, pointwise decay estimates in time, $L^p$ decay estimates
and Strichartz estimates.  More intricate are microlocal estimates and
propagation estimates.  The pointwise decay estimates for
Schr\"odinger operators were proven first in three dimensions and
were obtained for short range potentials \cite{JK}:
$$
\|\jap{x}^{-\sigma} e^{-\i tH}P_c(H)\jap{x}^{-\sigma}\|=\mathcal{O}(t^{-3/2}),
$$
where $\jap{x}^2\equiv 1+|x|^2$, $\sigma$ is large enough, and $P_c(H)$
stands for the projection on the continuous spectral part of $H$.
Here $H\equiv -\Delta+V(x).$

This was later extended by various authors, unified to arbitrary
dimension and allowing resonances at thresholds in \cite{JN1}.  These
estimates play an important role in proving the  $L^p$
decay estimates:
$$
\| e^{-\i tH}P_c(H)\psi\|_{L^{\infty}(\mathbb{R}^n)}\le
ct^{-n/2}\|\psi\|_{L^1(\mathbb{R}^n)}.
$$
Such estimates were proven in some generality in \cite{JSS} in three or
more dimensions. The Kato-Jensen estimate above was used to control
the low energy part of the solution. Moreover, it was remarked by
Ginibre [unpublished], that the Kato-Jensen estimate, when combined
with iterated Duhamel formula, can imply directly a slightly weaker
$L^p$ estimate:
 $$
 \| e^{-\i tH}P_c(H)\psi\|_{L^{\infty}(\mathbb{R}^n)+L^2}\le
 ct^{-n/2}\|\psi\|_{L^1(\mathbb{R}^n)\cap L^2}.
 $$
 This was extended to N-body charge transfer Hamiltonians in
 \cite{RSS1,RSS2}.

 Subsequent works have extended the $L^p$ estimates to all dimensions,
 and general classes of potential perturbations. See
 e.g.\cite{Ya,Sch,RSch,EGG,DSS,KK} and many more.  Common to all these
 results is the explicit use of the kernel of the (unperturbed)
 Hamiltonian. Therefore, such methods are difficult to implement on
 manifolds. In fact, on manifolds most results are of the local decay
 type and Strichartz estimates \cite{RoT,BSo1,BSo2,DR,Ta}.  The
 pointwise decay estimates and the $L^p$ estimates are not known or
 not optimal. In contrast, the abstract method we develop here and in
 a subsequent paper, is not using resolvent estimates. Thus, it is
 applicable in cases where explicit or perturbative methods of
 constructing the resolvent are not suitable. See e.g. Examples
 7.4 - 7.6.

A completely independent method of getting pointwise estimates is based on positive commutator techniques. Mourre's abstract theory to prove decay estimates is based on the Mourre
estimate:
$$
E_I(H)\i [H,A]E_I(H)\ge \theta E_I(H)
$$
for some $\theta>0$, where $E_I(H)$ is the spectral projection on an
interval $I$.  Under regularity assumptions on the pair $H,A$ similar
to ours, Mourre proved that the following local decay estimate holds:
$$
\int \|\jap{A}^{-\sigma}e^{-\i tH}E_I(H)\psi\|^2 dt \le c\|\psi\|^2.
$$
Mourre's method was then generalized in \cite{JMP} and later in
\cite{BouG,BGS} to prove pointwise in time decay estimates. See the
discussion and details at the end of Section \ref{s:calpha}.  Later,
in \cite{SS,HSS,Ger} a new, time dependent method was developed to
prove pointwise decay estimates, local decay and other propagation
estimates, starting only from the Mourre estimate.  The propagation
estimates of \cite{SS}, which also allowed some classes of time
dependent Hamiltonians, and optimized in \cite{HSS} read:
 $$
 \|F(A\le \epsilon t)e^{-\i tH}E_I(H)\psi\|=\mathcal{O}(t^{-m})\|\jap{A}^{m+1}\psi\|,
 $$
 $$
 \|F(A\ge  bt)e^{-\i tH}E_I(H)\psi\|=\mathcal{O}(t^{-m})\|\jap{A}^{m+1}\psi\|,
 $$
 for all $m$ depending only on the regularity of the potential and the
 localization of the initial data.  $\epsilon, b$ depend only on the
 interval $I$, and the constant $\theta$.  This method and results
 provide a powerful tool to spectral and scattering theory, including
 the N-body systems and Quantum Field theory.  However, the positivity
 condition in the Mourre estimate breaks down at (finite) thresholds.

Another way around this problem are the Morawetz type estimates. They apply
at thresholds, but limited to nontrapping potentials. The extension to
repulsive potentials and low dimension was established as well in some
cases.
Mourre's method was extended in many works to include thresholds
\cite{BouG,MR,MT,MRT1,MRT2,BG,GJ1,GJ2,RT1,RT2,RT3,RT4,Ti,Sa,BH,So}.
They are based on requiring the Mourre estimate to hold with a lower bound given by some positive operator, which is not
a multiple of the identity.
However these methods so far could not be versatile enough to include many
common systems, mainly due to complicated assumptions or the use of
abstract weighted spaces. They only imply local decay estimates of the type mentioned above.

In this work and forthcoming papers, we develop a new abstract theory
to prove {\it pointwise} decay estimates in weighted spaces, starting
only from a general commutator identity that should be satisfied by
the Hamiltonian.  We will show that for Schr\"odinger type equations
generated by an abstract Hamiltonian $H$, as well as Klein Gordon and
wave equations, pointwise decay estimates of the Kato-Jensen type hold
using the following assumptions:

\begin{itemize}

\item[a)] The pair $H,A$ with $A$ self adjoint, should satisfy regularity
conditions similar to Mourre's method.

\item[b)] A commutator identity of the type $\i[H,A]=\theta(H)+Q$
with $[Q,H]=0$ and $Q$ is $H$-\text{bounded}.

\item[c)] The subspace $\mathcal{E}$ of vectors which satisfy local decay (as
above) and are in the domain of $A$, is known in some explicit sense,
e.g. that it is all vectors in the domain of $\langle A\rangle^m$ and in
some invariant subspace under the dynamics (generated by $H$).

\end{itemize}

These conditions differ in several aspects from the standard theories
above.  The commutator condition is restrictive, e.g. it is unstable
under small perturbations of the Hamiltonian.  However, we show in a
subsequent work that local decay estimates are sufficient to absorb
the effects of classes of perturbations; it is done by constructing a
modified conjugate operator $\tilde A$, that satisfies the above
conditions.  So, in fact, the main condition is to identify, in some
explicit way the subspace of vectors which satisfies local decay.  It
should be noted that positivity of commutators is not used. But,
positivity would imply local decay, and better decay estimates in
certain cases.  To prove local decay estimates, either some resolvent
bounds or some weakly positive commutators can be employed. In
practice, this can be achieved by using Morawetz type estimates or
elementary perturbative resolvent estimates, relying only on the Fredholm
alternative and compactness arguments.

We will then give several examples to show that such estimates follow
effortlessly from the general theory.
In followup papers we extend this method to include perturbations of $H$
of the types described above.

\section[short]{Evanescent states}\label{s:eva}

Let $H$ be a self-adjoint operator on the Hilbert space $\ch$ with
spectral measure $E$. If $u\in\ch$ let $E_u$ be the measure
$E_u(J)=\|E(J)u\|^2$ and $\psi_u:\R\to\C$ the function
$\psi_u(t)=\braket{u}{\e^{\i tH}u}=\int_\R\e^{\i
  t\lambda}E_u(\d\lambda)$.  We are interested in vectors $u$ such
that $\psi_u(t)\to 0$ as $t\to\infty$ and in the rapidity of this
decay.

Note that $|\psi_u(t)|^2$ is a physically meaningful quantity if we
think of $H$ as the Hamiltonian of a system whose state space is
$\ch$. Indeed, if $u,v$ are vectors of norm one then
$|\braket{v}{\e^{\i tH}u}|^2$ is the probability of finding the system
in the state $v$ at moment $t$ if the initial state is $u$, hence
$|\psi_u(t)|^2$ \emph{is the probability that at moment $t$ the system
  be in the same state $u$ as at moment $t=0$}.

\begin{remark}\label{re:comments}{\rm In this paper we are interested
    in the decay properties of the functions $\psi_u$ for $u$ in the
    absolute continuity subspace $\ch_H^{\mathrm{ac}}$ of $\ch$
    relatively to $H$. We shall see that $\psi_u\in L^2(\R)$ for $u$
    in a dense subspace of $\ch_H^{\mathrm{ac}}$ but in rather simple
    cases it may happen that $\psi_u\in L^1(\R)$ only for
    $u=0$. Formally speaking, the physically interesting quantity
    $|\psi_u(t)|^2$ generically decays more rapidly than
    $\jap{t}^{-1}$ but not as rapidly as $\jap{t}^{-2}$.  Our results
    concern mainly the rate of this decay, for example we give
    conditions such that $|\psi_u(t)|^2$ is really dominated by
    $\jap{t}^{-1}$, not only in an $L^2$ sense.
}\end{remark}

Since $\psi_u$ is (modulo a constant factor) the Fourier transform of
$E_u$, there is a strong relation between the decay of $\psi_u$ and
the smoothness of $E_u$.  If $u$ is absolutely continuous with respect
to $H$ then $\psi_u\in C_0(\R)$ (space of continuous functions which
tend to zero at infinity). However, the decay may be quite slow if
$E_u$ is not regular enough.

\begin{example}\label{ex:slow}
  Let $\Lambda$ be a real compact set with empty interior and strictly
  positive Le\-bes\-gue measure and let $H$ be the operator of
  multiplication by $x$ in $\ch=L^2(\Lambda,\d x)$. Then the spectrum
  of $H$ is purely absolutely continuous but $\psi_u\nin L^1(\R)$ for
  all $u\in\ch\setminus\{0\}$. {\rm Indeed, if $u\in\ch$ and we extend
    it by zero outside $\Lambda$ then $\psi_u(t)=\int \e^{\i
      tx}|u(x)|^2 \d x$ hence if $\psi_u$ is integrable then $|u|^2$
    is the Fourier transform of an integrable function, so it is
    continuous, so the set where $|u(x)|^2\neq0$ is open and contained
    in $\Lambda$, hence it is empty.  }\end{example}

On the other hand, if $H$ has an absolutely continuous component then
there are plenty of $u$ such that $\psi_u\in L^2(\R)$: indeed, observe
that \emph{$\psi_u\in L^2(\R)$ if and only if $E_u$ is an absolutely
  continuous measure with derivative $E_u'\in L^2(\R)$ and then
  $\|\psi_u\|_{L^2}=\sqrt{2\pi}\|E_u'\|_{L^2}$}.

More generally, if we denote $E_{v,u}$ the complex measure
$E_{v,u}(J)=\braket{v}{E(J)u}$ then $\braket{v}{\e^{\i
    tH}u}=\int\e^{\i t\lambda}E_{v,u}(\dd\lambda)$ hence the left hand
side belongs to $L^2(\R)$ if and only if the measure $E_{v,u}$ is
absolutely continuous and has square integrable derivative $E_{v,u}'$
and then we have $\int_\R |\braket{v}{\e^{\i tH}u}|^2 \d t=2\pi\int
|E_{v,u}'(\lambda)|^2\dd\lambda$. It is easy to prove the inequality
$|E_{v,u}'(\lambda)|^2\leq E_{v}'(\lambda)E_{u}'(\lambda)$, see
(\cite[page 1002]{Ro} or \cite[Section 3.5]{BW}) and this implies
$E_{u+v}^{\prime 1/2}\leq E_u^{\prime 1/2}+E_v^{\prime 1/2}$. Thus, if
we set for any $u\in\ch$
\[ [u]_H= \left(\int_\R |\psi_u(t)|^2 \d t \right)^{1/4} =
\left(2\pi\int_\R E_u'(\lambda)^2 \d\lambda \right)^{1/4}
\]
then
\begin{equation}\label{eq:ce}
\ce\equiv\ce(H)=\{u\in\ch \mid [u]_H<\infty\}
\end{equation}
\emph{is a dense linear subspace of the absolutely continuous subspace
  of $H$ and $[\cdot]_H$ is a complete norm on it}.  We mention that
the relation
$|E_{v,u}'(\lambda)|^2\leq E_{v}'(\lambda)E_{u}'(\lambda)$ also
implies
\begin{equation}\label{eq:ineq}
\left(\int_\R |\braket{v}{\e^{\i tH}u}|^2 \d t \right)^{1/2} \leq [v]_H [u]_H .
\end{equation}

\begin{lemma}\label{lm:scom}
  If $J\in B(\ch)$ commutes with $H$ then $J\ce\subset\ce$ and
  $[Ju]_H\leq \|J\| [u]_H$. If $J_n=\theta_n(H)$ with $\{\theta_n\}$ a
  uniformly bounded sequence of Borel functions such that $\lim_{n}
  \theta_n(\lambda)=1$ for all $\lambda\in\R,$ then for any
  $u\in\ce$ we have $\lim_n[J_nu]_H=[u]_H$.
\end{lemma}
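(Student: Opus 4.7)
The plan is to reduce both assertions to the explicit formula $[u]_H^4 = 2\pi \int_\R E_u'(\lambda)^2 \, \d\lambda$ derived just before the lemma statement, so everything becomes a statement about Radon--Nikodym derivatives and pointwise estimates on them.

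For the first part, since $J \in B(\ch)$ commutes with the self-adjoint operator $H$, it commutes with every bounded Borel function of $H$, in particular with the spectral projection $E(I)$ for every Borel set $I \subset \R$. Hence
\[
E_{Ju}(I) = \|E(I)Ju\|^2 = \|JE(I)u\|^2 \le \|J\|^2 \|E(I)u\|^2 = \|J\|^2 E_u(I),
\]
so $E_{Ju} \le \|J\|^2 E_u$ as positive Borel measures. For $u \in \ce$, $E_u$ is absolutely continuous with density $E_u' \in L^2(\R)$; the measure inequality then forces $E_{Ju}$ to be absolutely continuous too, with $0 \le E_{Ju}'(\lambda) \le \|J\|^2 E_u'(\lambda)$ almost everywhere. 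Squaring, integrating and applying the formula above yields $[Ju]_H \le \|J\|[u]_H$, which in particular shows $Ju \in \ce$.

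For the second part, with $J_n = \theta_n(H)$ the functional calculus gives the exact identity
\[
E_{J_n u}(I) = \|E(I)\theta_n(H)u\|^2 = \int_I |\theta_n(\lambda)|^2 \, E_u(\d\lambda),
\]
so $E_{J_n u}' = |\theta_n|^2 E_u'$ a.e. Consequently
\[
[J_n u]_H^4 = 2\pi \int_\R |\theta_n(\lambda)|^4 E_u'(\lambda)^2 \, \d\lambda.
\]
Setting $M = \sup_n \|\theta_n\|_\infty < \infty$, the integrand is pointwise dominated by the $L^1(\R)$ function $M^4 E_u'(\lambda)^2$ and tends pointwise to $E_u'(\lambda)^2$ by hypothesis. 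Dominated convergence yields $[J_n u]_H^4 \to 2\pi \int_\R E_u'^2 \, \d\lambda = [u]_H^4$, and taking fourth roots gives the claim.

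There is no real obstacle here: the only slightly non-trivial point is ensuring that a bounded operator commuting with $H$ commutes with all spectral projections, which is standard spectral theory (e.g., via commutation with the resolvent), and converting the measure inequality $E_{Ju} \le \|J\|^2 E_u$ into the pointwise inequality on densities, which is immediate from the uniqueness of Radon--Nikodym derivatives.
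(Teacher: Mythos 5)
Your proof is correct and follows exactly the same route as the paper's (very terse) argument: the pointwise bound $E_{Ju}'\leq\|J\|^2E_u'$ for the first part, and the identity $E_{\theta_n(H)u}'=|\theta_n|^2E_u'$ combined with dominated convergence for the second. You have merely filled in the routine spectral-theoretic details that the paper declares ``obvious.''
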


\proof For the first part we use $E'_{Ju}(\lambda)\leq
\|J\|^2E'_u(\lambda)$ (which is obvious) while for the second part
$E'_{\theta_n(H)u}(\lambda)=\theta_n^2(\lambda)E'(\lambda)$ and the
dominated convergence theorem.  \qed

The quantity $\int_\R |\psi_u(t)|^2 \d t$ has a simple physical
interpretation in the quantum setting: \emph{if $u,v$ are two state
  vectors then $\int_\R |\braket{v}{\e^{\i tH}u}|^2 \d t$ is the total
  time spent by the system in the state $v$ if the initial state is
  $u$}. Hence we may say that
$\int_\R |\braket{u}{\e^{\i tH}u}|^2 \d t$ is the \emph{lifetime of
  the state $u$}.  The elements of $\ce(H)$ are those of finite
lifetime, or states in which the system spends a finite total time. We
might call them \emph{self evanescent states}, and they are absolutely
continuous with respect to $H$. Note that there is a Schr\"odinger
Hamiltonian $H$ and there is a state $u$ in the singularly continuous
subspace of $H$ such that $\psi_u(t)=\mathcal{O}(|t|^{-1/2+\varepsilon})$ for
any $\varepsilon>0$ \cite{Si}.

Another interesting class $\ce_\infty\equiv\ce_\infty(H)$ is that of
\emph{evanescent states} defined by the condition
$\int_\R |\braket{v}{\e^{\i tH}u}|^2 \d t<\infty$ for any $v$:
\emph{such a state $u$ spends a finite time in any state $v$}. The
evanescent states disappear (or go to infinity) in a natural quantum
mechanical sense, which explains the fundamental role they play in the
Rosenblum Lemma \cite{Ro} and later on in the Birman-Kato trace class
scattering theory.  A simple argument shows that $\ce_\infty$ is the
linear subspace of $\ce$ consisting of vectors $u$ such that $E_u'$ is
a bounded function.  In particular, $\ce_\infty$ is dense in the
absolutely continuity subspace of $H$.

\begin{example}\label{ex:eva}{\rm If $H=q=$ operator of multiplication
    by $x$ in $L^2(\R,\d x)$ then $\ce(q)=L^4(\R)$ and
    $\ce_\infty(q)=L^\infty(\R)$. Indeed, $\braket{u}{\e^{\i
        tq}u}=\int_\R \e^{\i tx}|u(x)|^2\d x$ is an $L^2$ function of
    $t$ if and only if $|u|^2\in L^2$ and then
    $[u]_q=(2\pi)^{1/4}\|u\|_{L^4}$.  On the other hand,
    $\braket{v}{\e^{\i tq}u}=\int_\R \e^{\i tx}\bar{v}(x)u(x)\d x$ is
    an $L^2$ function of $t$ for any $v\in L^2$ if and only if
    $\bar{v}u\in L^2$ for any $v\in L^2$ hence if and only if $u\in
    L^\infty$.  }\end{example}

\section{Notes on commutators}\label{s:c1a}

Let $A$ be a self-adjoint operator on a Hilbert space $\ch$.  If $S$
is a bounded operator on $\ch$ then we denote $[A,S]_\circ$ the
sesquilinear form on $D(A)$ defined by
$[A,S]_\circ(u,v)=\braket{Au}{Sv}-\braket{u}{SAv}$. As usual, we set
$[S,A]_\circ=-[A,S]_\circ$, $[S,\i A]_\circ=\i [S,A]_\circ$, etc.  We
say that \emph{$S$ is of class $C^1(A)$}, and we write $S\in C^1(A)$,
if $[A,S]_\circ$ is continuous for the topology induced by $\ch$ on
$D(A)$ and then we denote $[A,S]$ the unique bounded operator on $\ch$
such that $\braket{u}{[A,S]v}=\braket{Au}{Sv}-\braket{u}{SAv}$ for all
$u,v\in D(A)$.  It is easy to show that $S\in C^1(A)$ if and only if
$SD(A)\subset D(A)$ and the operator $SA-AS$ with domain $D(A)$
extends to a bounded operator $[A,S]\in B(\ch)$. Moreover, $S$ is of
class $C^1(A)$ if and only if the following equivalent conditions are
satisfied
\begin{compactenum}
\item the function $t\mapsto \e^{-\i t A}S\e^{\i t A}$ is Lipschitz in
  the norm  topology in $ B(\ch)$
\item the function $t\mapsto \e^{-\i t A}S\e^{\i t A}$ is of class
  $C^{1}$ in the strong operator topology
\end{compactenum}
and then we have $[S,\i A]=\frac{ d}{ dt}\e^{-\i t A}S\e^{\i t
  A}|_{t=0}$.

Clearly $C^1(A)$ is a $*$-subalgebra of $ B(\ch)$ and the usual
commutator rules hold true: for any $S, T\in C^{1}(A)$ we have
$[A,S]^*=-[A,S^*]$ and $[A,ST]= [A,S]T+ S[A,T]$, and if $S$ is
bijective then $S^{-1}\in C^1(A)$ and $ [A,S^{-1}]= -
S^{-1}[A,S]S^{-1}$.

We often abbreviate $S'=[S,\i A]$ if the operator $A$ is
  obvious from the context. Then we may write $(S')^*=(S^*)'$, $(S
  T)'= S'T+ ST'$, and $(S^{-1})'= - S^{-1}S'S^{-1}$.

We consider now the rather subtle case of unbounded operators. Note
that we always equip the domain of an operator with its graph
topology.  If $H$ is a self-adjoint operator on $\ch$ then
$[A,H]_\circ$ is the sesquilinear form on $D(A)\cap D(H)$ defined by
$[A,H]_\circ(u,v)=\braket{Au}{Hv}-\braket{Hu}{Av}$. By analogy with
the bounded operator case, one would expect that requiring denseness
of $D(A)\cap D(H)$ in $D(H)$ and continuity of $[A,H]_\circ$ for the
graph topology of $D(H)$ would give a good $C^1(A)$ notion. For
example, this should imply the validity of the virial theorem, nice
functions of $H$ (at least the resolvent) should also be of class
$C^1$, etc. However this is not true, as the following example from
\cite{GG} shows.

\begin{example}\label{ex:exo}{\rm In $\ch=L^2(\R,\dd x)$ let $q=$
    operator of multiplication by $x$ and $p=-\i\frac{d}{dx}$.  Let
    $A=\e^{\omega p}-p$ and $H=\e^{\omega q}$ with
    $\omega=\sqrt{2\pi}$. This value of $\omega$ is chosen because
    $\e^{\omega p}\e^{\omega q}=\e^{\omega q}\e^{\omega p}$ on a very
    large set although the operators $\e^{\omega p}$ and $\e^{\omega
      q}$ do not commute. Then $D(A)\cap D(H)$ is dense in both $D(A)$
    and $D(H)$ (moreover, $D(H)\cap D(HA)$ is dense in $D(H)$), one
    has $[H,\i A]_\circ=\omega H$ on $D(A) \cap D(H)$, but
    $(H+\i)^{-1}\nin C^1(A)$. }\end{example}

A convenient definition of the $C^1(A)$ class for any self-adjoint
operator is as follows. Let $ R(z)= (H-z)^{-1}$ for $z$ in the
resolvent set $\rho(H)$ of $H$.  We say that \emph{$H$ is of class
  $C^1(A)$} if $R(z)\in C^1(A)$ for some (hence for all)
$z\in\rho(H)$.  In this case the space $R(z)D(A)$ is independent of
$z\in \rho(H)$, it is a core of $H$, and is a dense subspace of
$D(A)\cap D(H)$ for the intersection topology, i.e.\ for the norm
$\|u\|+\|Au\|+\|Hu\|$. Moreover:

\begin{proposition}\label{pr:abg}
Let $A,H$ be self-adjoint operators on a Hilbert space $\ch$.
\begin{compactenum}
\item $H$ is of class $C^{1}(A)$ if and only if the next two
  conditions are satisfied:
\begin{compactenum}
\item $[A,H]_\circ$ is
  continuous for the topology induced by $D(H)$ on $D(A)\cap D(H)$,
\item there is $z\in \rho(H)$ such that $\{u\in D(A) \mid
  R(z)u\in D(A)\}$ is a core for $A$.
\end{compactenum}
\smallskip
\item If $H\in C^1(A)$ then $D(A)\cap D(H)$ is dense in $D(H)$ hence
  $[A,H]_\circ$ has a unique extension to a continuous sesquilinear
  form $[A,H]$ on $D(H)$. We have:
  \begin{equation}\label{eq:ahcom} [A,R(z)] = - R(z) [A,H] R(z) \quad
    \forall\, z\in\rho(H).
\end{equation}
\end{compactenum}
\end{proposition}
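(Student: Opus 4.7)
The plan is to build everything around the commutator identity for the resolvent,
\[
[A, R(z)] = -R(z)\,[A,H]\,R(z),
\]
treating it first as an equality of sesquilinear forms on a convenient dense subspace of $(D(A) \cap D(H))^2$ and then extending by continuity. Both halves of (1) and all of (2) will flow from this one identity.

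For the ``only if'' half of (1) together with (2), assume $R := R(z) \in C^1(A)$, so the bounded operator $[A,R]$ exists and $RD(A) \subset D(A)$. Condition (b) is automatic: the set in question equals $D(A)$, trivially a core. Density of $D(A) \cap D(H)$ in $D(H)$ follows from $R^* = R(\bar z) \in C^1(A)$ (so $R^*D(A) \subset D(A) \cap D(H)$), the fact that $R^*\colon \ch \to D(H)$ is a topological bijection, and density of $D(A)$ in $\ch$. On the smaller dense subspace $R^*D(A) \times RD(A)$, write $u = R^*f$, $v = Rg$ with $f, g \in D(A)$. The relations $Au = R^*Af + [A,R^*]f$, $Av = RAg + [A,R]g$, $Hu = f + \bar z u$, $Hv = g + zv$, together with the self-adjointness of $A$ on $D(A)$ (which kills the residual $z(\braket{Au}{v} - \braket{u}{Av})$ contribution), yield
\[
[A, H]_\circ(u, v) = \braket{Au}{g} - \braket{f}{Av} = -\braket{f}{[A,R]\,g},
\]
the last step folding the two commutator terms through the identity $[A,R^*]^* = -[A,R]$. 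This immediately gives the bound $|[A,H]_\circ(u,v)| \leq \|[A,R]\|\,\|u\|_{D(H)}\,\|v\|_{D(H)}$ on the smaller space.

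The delicate step is extending the identity to all of $(D(A) \cap D(H))^2$. First fix $v = Rg \in RD(A)$ and approximate $u \in D(A) \cap D(H)$ by $u_n = R^*f_n$ with $f_n \in D(A)$ and $f_n \to (H-\bar z)u$ in $\ch$, so that $u_n \to u$ in the graph norm of $H$. The right side $-\braket{f_n}{[A,R]g}$ converges trivially; each term of $[A,H]_\circ(u_n, v) = \braket{Au_n}{g + zv} - \braket{Hu_n}{Av}$ converges because $g, v \in D(A)$ allow the swap $\braket{Au_n}{g} = \braket{u_n}{Ag}$ (similarly for $v$) combined with $u_n \to u$ in $\ch$, while $Hu_n \to Hu$. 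Then fix $u$ and extend in $v$: here $Av_n$ need not converge, so instead introduce $w := R^*Hu = u + \bar z R^*u \in D(A)$, write $Hu = (H-\bar z)w$, apply the identity just established to the pair $(w, v_n)$, and use $\braket{w}{Av_n} = \braket{Aw}{v_n}$ to rewrite $\braket{Hu}{Av_n}$ as a sum of inner products that do converge; careful bookkeeping of scalar conjugations ($\braket{\bar z \phi}{\psi} = z\braket{\phi}{\psi}$) shows the limit is exactly $\braket{Hu}{Av}$. This establishes the identity on $(D(A) \cap D(H))^2$, hence condition (a), the unique continuous extension $[A,H]$ on $D(H) \times D(H)$, and, by inverting the substitution $f = (H-\bar z)u$, $g = (H-z)v$, the resolvent formula of (2).

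For the ``if'' half of (1), run the construction in reverse. The core $\cd$ from (b) satisfies $R\cd \subset D(A) \cap D(H)$ and is dense in $\ch$, so $R\cd$ is dense in $D(H)$; with (a) this produces a unique continuous extension $[A,H]$ on $D(H)$. For $g \in \cd$ and $f \in D(A)$ with $R^*f \in D(A)$, the reverse of the computation above shows $\braket{f}{(AR - RA)g} = -[A,H]_\circ(R^*f, Rg)$, bounded by $C\|f\|\|g\|$ via (a); extending by density in $f$ and invoking closedness of $A$ to handle $g \in D(A) \setminus \cd$ yields both $RD(A) \subset D(A)$ and boundedness of $[A,R]$, i.e., $R \in C^1(A)$. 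The main obstacle throughout is the asymmetric extension in paragraph three, where graph-norm approximation of $A$ fails and the auxiliary vector $w = R^*Hu$ is indispensable for routing every inner product through vectors lying in $D(A)$.
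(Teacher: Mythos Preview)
The paper does not give its own proof of this proposition: immediately after the statement it writes ``This is Theorem 6.2.10 in \cite{ABG}'' and moves on. So there is nothing to compare your argument against in the paper itself; what follows is an assessment of your sketch on its own merits.

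Your overall architecture is the standard one, and the ``only if'' computation on $R^*D(A)\times RD(A)$ is correct. The problem is the second half of your third paragraph, the extension in $v$. You introduce $w=R^*Hu=u+\bar z R^*u\in D(A)\cap D(H)$, apply the identity already proved to $(w,v_n)$, and claim that ``careful bookkeeping'' gives $\lim_n\braket{Hu}{Av_n}=\braket{Hu}{Av}$. Unwinding this: from the identity for $(w,v_n)$ one gets
\[
\braket{Hw}{Av_n}=\braket{Aw}{Hv_n}-B(w,v_n),\qquad
\braket{Hu}{Av_n}=\braket{Hw}{Av_n}-z\braket{Aw}{v_n},
\]
and both right-hand sides converge. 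But the limit you obtain is $\braket{Aw}{Hv}-B(w,v)-z\braket{Aw}{v}$, and asking this to equal $\braket{Hu}{Av}=\braket{Hw}{Av}-z\braket{w}{Av}$ is exactly the statement $[A,H]_\circ(w,v)=B(w,v)$. Since $w$ is just another element of $D(A)\cap D(H)$ (it lies in $R^*D(A)$ only if $Hu\in D(A)$, which is not assumed), this is circular: you have reduced the claim for $(u,v)$ to the same claim for $(w,v)$.

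The gap is not cosmetic. Approximating $v$ only in the $D(H)$-norm is too weak to control $\braket{Hu}{Av_n}$ directly, and no amount of bookkeeping repairs this. What the \cite{ABG} proof uses instead is the statement the paper records just before the proposition: when $H\in C^1(A)$ the space $R(z)D(A)$ is dense in $D(A)\cap D(H)$ for the \emph{intersection} topology $\|u\|+\|Au\|+\|Hu\|$. With that density in hand the extension is trivial, because $[A,H]_\circ$ is manifestly continuous for the intersection topology and $B(u,v)=-\braket{(H-\bar z)u}{[A,R](H-z)v}$ is continuous for the weaker $D(H)$ topology; agreement on a set dense for the stronger topology then forces agreement everywhere. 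A similar issue affects your ``if'' direction: you use $f$ with $R^*f\in D(A)$, i.e.\ $f$ in the core attached to $\bar z$, whereas hypothesis (b) only supplies a core for one specific $z$.
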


This is Theorem 6.2.10 in \cite{ABG}. The condition (a) above is quite
easy to check in general but not condition (b) because it involves a
certain knowledge of the resolvent of $H$, which is a complicated
object. We now describe criteria which allow one to avoid this
problem.

We denote $\ch^1=D(H)$ (equipped with the graph topology) and
$\ch^{-1}=D(H)^*$ its adjoint space.  The identification of the
adjoint space $\ch^*$ of $\ch$ with itself via the Riesz Lemma gives
us a scale $\ch^1\subset\ch\subset\ch^{-1}$ with continuous and dense
embeddings. If we define $\ch^{s}:=[\ch^1,\ch^{-1}]_{(1-s)/{2}}$ for
$-1\leq s\leq1$ by complex interpolation then $(\ch^{s})^*=\ch^{-s}$
for any $s$ and $\ch^{1/2}$ is just $D(|H|^{1/2})$. Finally, we have
continuous and dense embeddings
\[
\ch^1\subset \ch^{1/2}\subset\ch\subset \ch^{-1/2}\subset\ch^{-1}  .
\]
If $H\in C^1(A)$ the continuous sesquilinear
form $[A,H]$ on $D(H)$ is then identified with a linear continuous
operator $\ch^1\to\ch^{-1}$ and this is useful for example
because it gives a simple interpretation to supplementary conditions
like $[A,H]\ch^1\subset\ch$. Observe that
$$
H':=[H,\i A]:\ch^1\to\ch^{-1}
$$
is a continuous symmetric operator. Now the following assertions are
consequences of \cite[Theorem 6.3.4, Lemma 7.5.3]{ABG} and \cite[Lemma
2]{GG}.

\begin{enumerate}

\item If $\e^{\i tA}\ch^1\subset \ch^1\,(\forall t)$ then $H\in C^1(A)$ if
  and only if condition (a) in part (1) of Proposition \ref{pr:abg}  is satisfied.

\item \label{p:2} If $H\in C^1(A)$ and $H'\ch^1\subset\ch$ then
  $\e^{\i tA}\ch^1\subset \ch^1 \,(\forall t)$ and the restrictions
  $\e^{\i tA}|\ch^1$ give a strongly continuous group of operators on
  the Hilbert space $\ch^1$.

\item If $\e^{\i tA}\ch^1\subset \ch^1\,(\forall t)$ then
  $D(A,\ch^1)=\{u\in\ch^1\cap D(A)\mid Au\in\ch^1\}$ is a dense
  subspace of $\ch^1$ and $H$ is of class $C^1(A)$ with
  $H'\ch^1\subset \ch$ if and only if
  $|\braket{Au}{Hv}-\braket{Hu}{Av}|\leq C \|u\|_{\ch}\|v\|_{\ch^1}$
  for all $u,v\in D(A,\ch^1)$.

\item Assume $\e^{\i tA}\ch^{1/2}\subset \ch^{1/2}\,(\forall t)$. Then
  $D(A,\ch^{1/2}):=\{u\in\ch^{1/2}\cap D(A)\mid Au\in \ch^{1/2}\}$ is
  dense in $\ch^{1/2}$ and if the quadratic form
  $\braket{Hu}{Au}-\braket{Au}{Hu}$ on $D(A,\ch^{1/2})$ is continuous
  for the topology induced by $\ch^{1/2}$ then $H\in C^1(A)$.

\end{enumerate}

We mention that Hypotheses $1$, $2'$ and $3$ on page 62 of \cite{CFKS}
imply that $H$ is of class $C^1(A)$, cf.\ relation (4.10) there.

We now give some ``pathological'' examples which clarify the notion of
$C^1$ regularity.

\begin{example}\label{ex:exp}{\rm Let $\ch=L^2(\R)$ and $A=p$. It is
    clear that the operator of multiplication by a rational real
    function is of class $C^1(p)$, in fact of class $C^\infty(p)$ in a
    natural sense. For example, if $H=q^{-m}$ then
    $(H+\i)^{-1}=q^m(1+\i q^m)^{-1}$ is clearly a bounded operator of
    class $C^1(p)$ if $m\in\N$ and $[q^{-m},\i p]=mq^{-m-1}$ as
    continuous forms on $D(q^{-m})$. The worst case is attained when
    $m=1$: then $H'=H^2$ hence $H'+\i:\ch^1\to\ch^{-1}$ is an
    isomorphism, in particular $H'\ch^1$ is not included in any of the
    smaller spaces $\ch^{s}$ with $s>-1$.  If $m\geq1$ is an odd
    integer then $H$ is of class $C^1(A)$ and $H'=mH^{1+1/m}$ where
    $x^{1/m}:=-|x|^{1/m}$ if $x<0$; now we have
    $H'\ch^1\subset\ch^{-1/m}$ and this is optimal.  }\end{example}

\begin{remark}\label{re:note}{\rm Example \ref{ex:exp} shows that
    \emph{if $H\in C^1(A)$ then neither $\e^{\i tA}$ nor $(A+\i
      \lambda)^{-1}$ leave invariant $D(H)$ in general}. }\end{remark}

If $H\in C^1(A)$ then $D(A)\cap D(H)$ is dense in $D(H)$ but \emph{is
  not dense in $D(A)$} in general.

\begin{example}\label{ex:dense}{\rm Let $H=q^{-m}$ with $m\geq1$ and
    $A=p$ as in Example \ref{ex:exp}. Then $D(A)$ is the Sobolev space
    consisting of functions $u\in L^2(\R)$ with derivative $u'\in
    L^2(\R)$, so we have $D(A)\subset C_0(\R)$ continuously.  Thus if
    $u\in D(A)\cap D(H)$ then $u$ is a continuous function such that
    $\int|u(x)|^2x^{-2m}\d x<\infty$ which implies $u(0)=0$. But
    $\{u\in D(A)\mid u(0)=0\}$ is a closed hyperplane of codimension
    one in the Hilbert space $D(A)$.  }\end{example}

Given $\varepsilon >0$, taking $m$ large in the preceding example we see that for any
($\varepsilon>0$) there is a self-adjoint operator $H$ of class $C^1(A)$
with $H'\ch^1\subset\ch^{-\varepsilon}$ such that $D(A)\cap D(H)$ is
not dense in $D(A)$. Thus \emph{the next result is optimal}.

\begin{proposition}\label{pr:dense}
  If $H\in C^1(A)$ and $H'\ch^1\subset\ch$ then $D(A)\cap D(H)$ is
  dense in $D(A)$. More precisely, if we set
  $R_\varepsilon=(1+\i\varepsilon H)^{-1}$ for $\varepsilon>0$ then
  $R_\varepsilon D(A)\subset D(A)\cap D(H)$ and $\slim R_\varepsilon
  =1$ in the Hilbert space $D(A)$.
\end{proposition}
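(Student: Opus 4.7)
The plan is to show the stronger statement: $R_\varepsilon u \to u$ in the graph norm of $A$ for every $u \in D(A)$, since the inclusion $R_\varepsilon D(A) \subset D(A) \cap D(H)$ will give the density conclusion for free.

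First I would establish the inclusion $R_\varepsilon D(A) \subset D(A) \cap D(H)$. Writing $1 + \i\varepsilon H = \i\varepsilon(H - \i/\varepsilon)$ shows that $R_\varepsilon = -\i\varepsilon^{-1} R(\i/\varepsilon)$ is a scalar multiple of a resolvent of $H$ at $z_0 = \i/\varepsilon \in \rho(H)$. Since $H \in C^1(A)$, the resolvent $R(z_0) \in C^1(A)$, so $R_\varepsilon \in C^1(A)$, which gives $R_\varepsilon D(A) \subset D(A)$. Combined with the standard inclusion $R_\varepsilon \ch \subset D(H) = \ch^1$, this yields $R_\varepsilon D(A) \subset D(A) \cap D(H)$.

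Next I would extract the key commutator identity. Applying \eqref{eq:ahcom} to $R(z_0)$ and using $R(z_0) = \i\varepsilon R_\varepsilon$ together with $[A,H] = \i H'$ gives, after the signs work out,
\[
[A, R_\varepsilon] = \varepsilon\, R_\varepsilon H' R_\varepsilon
\]
as a bounded operator on $\ch$. Here the hypothesis $H'\ch^1 \subset \ch$ is essential: by the closed graph theorem $H' \colon \ch^1 \to \ch$ is continuous, so the right-hand side is a genuine bounded operator and not just a form on $\ch^1$.

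The decisive analytic input is that $\varepsilon R_\varepsilon u \to 0$ in $\ch^1$ for every $u \in \ch$. By spectral calculus, $\|\varepsilon R_\varepsilon u\|^2 \leq \varepsilon^2 \|u\|^2$ and
\[
\|H \cdot \varepsilon R_\varepsilon u\|^2 = \int_\R \frac{\varepsilon^2 \lambda^2}{1 + \varepsilon^2 \lambda^2}\, \d E_u(\lambda),
\]
and the integrand is bounded by $1$ (integrable w.r.t.\ the finite measure $E_u$) and tends pointwise to $0$; dominated convergence gives the claim. This is where the factor $\varepsilon$ appearing in the commutator identity pays off.

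Finally, for $u \in D(A)$ I would write
\[
A R_\varepsilon u = R_\varepsilon A u + [A, R_\varepsilon] u = R_\varepsilon A u + R_\varepsilon H'(\varepsilon R_\varepsilon u).
\]
The first term converges to $Au$ in $\ch$ by the spectral theorem applied to $Au \in \ch$. The second term converges to $0$ in $\ch$ because $\varepsilon R_\varepsilon u \to 0$ in $\ch^1$, $H' \colon \ch^1 \to \ch$ is bounded, and $\|R_\varepsilon\|_{B(\ch)} \leq 1$. Combined with $R_\varepsilon u \to u$ in $\ch$, this gives $R_\varepsilon u \to u$ in $D(A)$, proving both assertions. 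The main subtlety is the one bypassed here by the trick of multiplying out the $\varepsilon$: a direct attempt to show $[A, R_\varepsilon] u \to 0$ via the bound $\|R_\varepsilon u\|_{\ch^1} = O(\varepsilon^{-1})$ for generic $u$ would fail, so one must instead use the identity to shift the $\varepsilon$ onto the inner $R_\varepsilon$ and apply spectral dominated convergence there.
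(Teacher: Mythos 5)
Your proof is correct and follows essentially the same route as the paper: the same inclusion $R_\varepsilon D(A)\subset D(A)\cap D(H)$, the same identity $[A,R_\varepsilon]=\varepsilon R_\varepsilon H'R_\varepsilon$, and the same mechanism of shifting the factor $\varepsilon$ onto the inner resolvent. The only (harmless) difference is in how the vanishing of the error term is verified: the paper factors $\varepsilon H'R_\varepsilon=H'R_1\cdot\varepsilon(1+\i H)R_\varepsilon$ and uses uniform boundedness plus convergence on the dense set $D(H)$, whereas you prove $\varepsilon R_\varepsilon u\to0$ in $\ch^1$ for all $u$ directly by spectral dominated convergence.
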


\proof We have $R_\varepsilon D(A)\subset D(A)\cap D(H)$ and
$[A,R_\varepsilon] = \varepsilon R_\varepsilon H'R_\varepsilon$ by
Proposition \ref{pr:abg}. Then \( \varepsilon\|H'R_\varepsilon\|\leq
\|H'R_1\|\|(\varepsilon +\i \varepsilon H) R_\varepsilon\| \leq C \)
and $\varepsilon H'R_\varepsilon u=\varepsilon H'R_1 R_\varepsilon (1+\i
H)u\to0$ if $u\in D(H)$. Thus $\slim_{\varepsilon\to0}
[A,R_\varepsilon] =0$ hence $AR_\varepsilon u\to Au$ for any $u\in
D(A)$.  \qed

This $C^1(A)$ property transfers from $H$ to some functions of $H$:
for example, it is easy to prove that $\vphi(H)\in C^1(A)$ if
$\vphi\in C^2(\R)$ and
$|\varphi(\lambda)|+|\vphi'(\lambda)|+|\vphi''(\lambda|\leq
C\jap{\lambda}^{-2}$. But obviously $\e^{\i H}\nin C^1(A)$ in general.

\begin{theorem}\label{th:exp}
  Let $H$ be a self-adjoint operator of class $C^1(A)$ and
  $t\in\R$. Then the restriction of $[A,\e^{\i tH}]_\circ$ to
  $D(A)\cap D(H)$ extends to a continuous form $[A,\e^{\i tH}]$ on
  $D(H)$ and, in the strong topology of the space of sesquilinear
  forms on $D(H)$, we have:
\begin{equation}\label{eq:exp}
[\e^{\i tH},A]=\int_0^t \e^{\i(t-s)H}H' \e^{\i sH} \dd s .
\end{equation}
\end{theorem}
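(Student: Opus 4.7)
The plan is to verify the formula first for $u,v\in D(A)\cap D(H)$ by a resolvent-based approximation of $e^{\i tH}$, and then extend by density to all of $D(H)$. As a preliminary step, I check that the right hand side of \eqref{eq:exp} makes sense as a continuous form on $D(H)\times D(H)$. By Proposition~\ref{pr:abg}, $H'=-\i[A,H]$ is bounded from $\ch^1$ to $\ch^{-1}$. Since $e^{\i sH}$ is a function of $H$, it restricts to a strongly continuous unitary group on $\ch^1$ with the graph norm. Thus for $u,v\in D(H)$ the integrand $\langle e^{-\i(t-s)H}u,H'e^{\i sH}v\rangle_{\ch^1,\ch^{-1}}$ is continuous in $s\in[0,t]$ and bounded by $\|H'\|_{\ch^1\to\ch^{-1}}\|u\|_{\ch^1}\|v\|_{\ch^1}$, so $\int_0^t e^{\i(t-s)H}H'e^{\i sH}\,\dd s$ is a continuous sesquilinear form on $D(H)$.

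Next, I establish the identity on $D(A)\cap D(H)$ via resolvent approximation. Fixing $t\neq0$, I set $\mu_n:=n/(\i t)\in\i\R\subset\rho(H)$ and $S_n:=(1-\i tH/n)^{-1}=-\mu_n R(\mu_n)$. From $R(\mu_n)\in C^1(A)$ and \eqref{eq:ahcom} one obtains $S_n\in C^1(A)$ with $[A,S_n]=-(t/n)S_nH'S_n$, and iterating the Leibniz rule for the $C^1(A)$ commutator yields $U_n(t):=S_n^{\,n}\in C^1(A)$ with
\[ [A,U_n(t)]=-\frac{t}{n}\sum_{j=1}^{n}S_n^{j}\,H'\,S_n^{n-j+1}. \]
Since $U_n(t)\in C^1(A)$ preserves $D(A)$, I can evaluate the form on $u,v\in D(A)\cap D(H)$:
\[ \langle Au,U_n(t)v\rangle-\langle u,U_n(t)Av\rangle=-\frac{t}{n}\sum_{j=1}^{n}\langle u,S_n^{j}H'S_n^{n-j+1}v\rangle. \]

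To pass to the limit, I exploit the uniform bound $|(1-\i tx/n)^{-j}|\le1$ for $x\in\R$, which gives $\|S_n^{j}\|_{B(\ch)}\le1$ and, since $S_n^{j}$ commutes with $H$, also $\|S_n^{j}\|_{B(\ch^1)}\le1$. Spectral calculus yields $S_n^{j_n}\to e^{\i\sigma tH}$ strongly on $\ch$ and on $\ch^1$ whenever $j_n/n\to\sigma\in[0,1]$; in particular $U_n(t)\to e^{\i tH}$ strongly, so the left hand side converges to $[A,e^{\i tH}]_\circ(u,v)$. For the Riemann sum, the pointwise strong convergence combined with the boundedness of $H':\ch^1\to\ch^{-1}$ gives termwise convergence of each matrix element to $\langle u,e^{\i sH}H'e^{\i(t-s)H}v\rangle$ at $s=jt/n$, with a uniform bound; hence the sum converges to $\int_0^t\langle u,e^{\i sH}H'e^{\i(t-s)H}v\rangle\,\dd s$, which equals $\int_0^t\langle u,e^{\i(t-s)H}H'e^{\i sH}v\rangle\,\dd s$ after the change of variable $s\mapsto t-s$. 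This proves \eqref{eq:exp} on the dense subspace $D(A)\cap D(H)$, and the continuity of the right hand side extends the form uniquely to $D(H)$. The hardest step will be justifying the Riemann-sum limit, which rests on combining pointwise spectral convergence with the uniform $B(\ch^1)$ bound on $S_n^{j}$ and the continuity of the integrand in $s$.
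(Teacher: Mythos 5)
Your proof is correct and follows essentially the same route as the paper's: approximate $\e^{\i tH}$ by powers of the resolvent $(1-\i tH/n)^{-n}$, apply the Leibniz rule for $C^1(A)$ commutators, and pass to the limit in the resulting Riemann sum. The only difference is technical and minor: the paper justifies the Riemann-sum limit by first reducing (via a uniform $\ch^1\to\ch^{-1}$ bound) to vectors of compact $H$-support and proving a quantitative $\mathcal{O}(n^{-1})$ estimate $\|R_n^{*k}u-\e^{-\i kH/n}u\|_{D(H)}=\mathcal{O}(n^{-1})$, whereas you use the continuous convergence $S_n^{j_n}\to\e^{\i\sigma tH}$ on $\ch^1$ together with a uniform bound and dominated convergence --- both are valid.
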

\proof Clearly it suffices to assume $t=1$. For $n\geq1$ integer let
$R_n=(1-\i H/n)^{-1}$. Then $R_n$ has norm $\leq1$ and $\e^{\i
  H}=\slim_{n\to\infty}R_n^n$ in both spaces $\ch$ and $D(H)$. Since
$H$ is of class $C^1(A)$ we have $R_n\in C^1(A)$ and
$[A,R_n]=\frac{\i}{n}R_n[A,H]R_n$, so $R_n^n\in C^1(A)$ and
\[
[A,R_n^n]=\sum_{k=0}^{n-1} R_n^k[A,R_n]R_n^{n-1-k} =
\frac{\i}{n}\sum_{k=1}^{n} R_n^{k} [A,H]  R_n^{n+1-k}  .
\]
It is clear that $\braket{u}{[A,R_n^n]v}\to\braket{u}{[A,\e^{\i
    H}]_0v}$ as $n\to\infty$ for all $u,v\in D(A)$. Thus it remains to
be shown that for all $u,v\in D(H)$:
\begin{equation}\label{eq:lim}
\frac{1}{n}\sum_{k=1}^{n} \braket{R_n^{*k}u}{[A,H] R_n^{n+1-k}v} \to
\int_0^1 \braket{\e^{-\i sH}u}{[A,H]\e^{\i(1-s)H}v} \dd s .
\end{equation}
We have
\[
\Big\|\sum_{k=1}^{n} R_n^{k} [A,H] R_n^{n+1-k}\Big\|_{\ch^1\to\ch^{-1}}
\leq n \|[A,H]\|_{\ch^1\to\ch^{-1}}
\]
hence it suffices to prove that \eqref{eq:lim} holds for $u,v$ in a
dense subspace of $D(H)$. So we may assume that $u,v$ have compact
support with respect to $H$.

Let $a$ be a number such that $|\log(1+z)-z|\leq a|z|^2$ if $z\in\C$
and $|z|<1/2$. If $\phi_n(x)=(1-\i x/n)^{-1}$ then for $x$ in a real
compact set, $1\leq k\leq n$, and $n$ large, we have
\[
|\phi_n(x)^k-\e^{\i\frac{kx}{n}}|
=|\e^{k\log(1-\i\frac{x}{n}) + \i\frac{kx}{n} } -1|
\leq C k |\log(1-\i x/n) +\i x/n|
\leq Ck a |x/n|^2
\]
where $C$ is a number depending only on the set where varies $x$. Thus
the last term above is an $\mathcal{O}(x^2/n)$ and so we get
$\|R_n^{*k}u-\e^{-\i \frac{k}{n}H}u\|_{D(H)}=\mathcal{O}(n^{-1})$. A
similar argument gives
$\|R_n^{n+1-k}v-\e^{\i\frac{n+1-k}{n}H}V\|_{D(H)}=\mathcal{O}(n^{-1})$. Hence:
\[
\frac{1}{n}\sum_{k=1}^{n} \braket{R_n^{*k}u}{[A,H] R_n^{n+1-k}v}
= \frac{1}{n}\sum_{k=1}^{n}
\braket{\e^{-\i\frac{k}{n}H}u}{[A,H] \e^{-\i\frac{k}{n}H}
\e^{\i\frac{n+1}{n} H}v}
+\mathcal{O}(n^{-1}).
\]
Finally, we have $\e^{\i\frac{n+1}{n} H}v\to \e^{\i H}v$ in $D(H)$ and the
$D(H)$-valued functions $s\mapsto\e^{-\i sH}u$ and $s\mapsto\e^{-\i
  sH}v$ are continuous. This proves \eqref{eq:exp}.
\qed

The relation \eqref{eq:exp} also holds in $B(D(H),D(H)^*)$ in the
strong topology and then one may easily prove relations like the next
one hold in $B(\ch)$:
\begin{equation}\label{eq:expr}
[A,\e^{\i tH}R(z)^2]= R(z)[A,\e^{\i tH}]R(z)
+[A,R(z)]\e^{\i tH}R(z) + \e^{\i tH} R(z)[A,R(z)] .
\end{equation}
If $H'D(H)\subset\ch$ then the right hand side of \eqref{eq:exp} will
clearly belong to $B(D(H),\ch)$ hence we shall also have $[A,\e^{\i
  tH}]\in B(D(H),\ch)$ and \eqref{eq:exp} will hold strongly in
$B(D(H),\ch)$.

We say that \emph{$H'$, or $[A,H]$, commutes with $H$} if for any
$t\in\R$ the relation $H'\e^{\i tH}=\e^{\i tH}H'$ holds in
$B(D(H),D(H)^*)$. This is clearly equivalent to
$H'\varphi(H)=\varphi(H) H'$ for any bounded Borel function
$\varphi:\R\to\C$.  Note also that $H'$ commutes with $H$ if and only
if there is $z\in\rho(H)$ such that $[A,R(z)]$ commutes with $R(z)$
(this condition is independent of $z$). If we set $R=R(z)$, we then
have $R'=-RH'R=-H'R^2$.

If $H'$ commutes with $H$ then Theorem \ref{th:exp} can be
significantly improved. If $k\in\N$ let $C^k_\rmb(\R)$ be the space of
functions in $C^k(\R)$ whose derivatives of orders $\leq k$ are
bounded.

\begin{proposition}\label{pr:com}
  Let $H$ be self-adjoint of class $C^1(A)$ such that $H'$ commutes
  with $H$ and let $\varphi\in C^1_\rmb(\R)$. Then the restriction of
  $[A,\varphi(H)]_\circ$ to $D(A)\cap D(H)$ extends to a continuous
  form $[A,\varphi(H)]$ on $D(H)$ and
  $[A,\varphi(H)]=[A,H]\vphi'(H)=\vphi'(H)[A,H]$.  In other terms:
\begin{equation}\label{eq:deriv}
  \varphi(H)'=\varphi'(H)H'=H'\varphi'(H) , \quad
  \text{in particular }  (\e^{\i tH})'=\i t H'\e^{\i tH}   .
\end{equation}
\end{proposition}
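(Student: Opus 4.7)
The plan is to prove the identity first for the exponentials $\varphi(\lambda)=\e^{\i t\lambda}$ using Theorem \ref{th:exp}, extend it to $C_\c^\infty(\R)$ by Fourier synthesis, and finally obtain the general $C^1_\rmb$ case by density in the functional calculus. The driving mechanism is that the commutation hypothesis collapses the integrand in \eqref{eq:exp}: since $H'\e^{\i sH}=\e^{\i sH}H'$ as forms on $\ch^1$, the $s$-integrand is constant and equal to $H'\e^{\i tH}=\e^{\i tH}H'$.

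First I apply Theorem \ref{th:exp} to obtain $[\e^{\i tH},A]=\int_0^t \e^{\i(t-s)H}H'\e^{\i sH}\,\dd s$ in $B(\ch^1,\ch^{-1})$. By the commutation hypothesis the integrand is $s$-independent, so the right-hand side equals $t\,H'\e^{\i tH}$ and hence $(\e^{\i tH})'=\i t H'\e^{\i tH}$, which is the proposition for $\varphi(\lambda)=\e^{\i t\lambda}$.

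Next, for $\varphi\in C_\c^\infty(\R)$ I write $\varphi(H)=(2\pi)^{-1}\int\widehat\varphi(t)\e^{\i tH}\,\dd t$ as a Bochner integral in $D(H)$, legitimate because $\widehat\varphi\in\mathcal{S}(\R)$ and $\e^{\i tH}$ is an isometry of $D(H)$. Testing $[A,\varphi(H)]_\circ$ against $u,v\in D(A)\cap D(H)$ and applying Fubini reduces it to $-(2\pi)^{-1}\int t\widehat\varphi(t)\braket{u}{H'\e^{\i tH}v}\,\dd t$. Pulling $H'$ outside the integral (valid because $H'\colon\ch^1\to\ch^{-1}$ is continuous and $t\widehat\varphi\in L^1$) and using the elementary identity $-(2\pi)^{-1}\int t\widehat\varphi(t)\e^{\i t\lambda}\,\dd t=\i\varphi'(\lambda)$ yields $[A,\varphi(H)]_\circ(u,v)=\i\braket{u}{H'\varphi'(H)v}$, the desired identity for $\varphi\in C_\c^\infty$.

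Finally, for arbitrary $\varphi\in C^1_\rmb(\R)$ I approximate by $\varphi_n:=\rho_{1/n}*(\varphi\cdot\chi(\cdot/n))\in C_\c^\infty(\R)$, where $\rho$ is a standard mollifier and $\chi\in C_\c^\infty(\R)$ with $\chi(0)=1$; then $\varphi_n\to\varphi$ and $\varphi_n'\to\varphi'$ pointwise with uniform $C_\rmb$-bounds. Applying the previous step to each $\varphi_n$ and passing to the limit on vectors $u,v\in D(A)\cap D(H)$, the left-hand side converges by spectral calculus and dominated convergence. On the right I need $\varphi_n'(H)v\to\varphi'(H)v$ in the graph norm of $H$ so that $H'\in B(\ch^1,\ch^{-1})$ can be applied continuously; this is the main delicate point, and it follows from dominated convergence in the spectral measure $E_v$ using $v\in D(H)$ together with the uniform $L^\infty$-bound on $\varphi_n'$. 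The resulting identity $[A,\varphi(H)]_\circ(u,v)=\i\braket{u}{H'\varphi'(H)v}$ on $D(A)\cap D(H)$ provides both the continuous extension to $D(H)$ and the value $[A,\varphi(H)]=\i H'\varphi'(H)=[A,H]\varphi'(H)$; the equality $\varphi'(H)H'=H'\varphi'(H)$ is automatic since $H'$ commutes with every bounded Borel function of $H$.
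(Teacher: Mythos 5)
Your proof is correct and follows essentially the same route as the paper: Theorem \ref{th:exp} plus the commutation hypothesis collapses the integral to give the exponential case, then Fourier synthesis handles a dense class of symbols, and a limiting argument (which you spell out via mollification, where the paper just says ``standard limiting procedure'') gives the general $C^1_\rmb$ case. The only cosmetic difference is that you use $C_\c^\infty$ as the intermediate class where the paper uses functions whose Fourier transform is a bounded measure with finite first moment; both work equally well here.
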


\proof Due to Theorem \ref{th:exp} we have $[A,\e^{\i tH}]=\i t
[A,H]\e^{\i tH}$ for any real $t$, hence the proposition is true if
$\varphi(\lambda)=\e^{\i t\lambda}$. This clearly implies the
proposition if $\varphi$ is the Fourier transform of a bounded measure
$\what\varphi$ such that $\int |x\what\varphi(x)|\dd x<\infty$. The
general case follows by a standard limiting procedure. \qed

\begin{example}\label{ex:expo}{\rm Consider once again the situation
    from Example \ref{ex:exp}. Then $[p,\e^{\i H}]_\circ$ is a
    restriction of $-mq^{-m-1}\e^{\i H}$ hence is not a bounded
    operator but it extends to a continuous form on $D(H)$.  In the
    worst case $m=1$ we get $[\e^{\i H},A]=H^2 \e^{\i H}$, hence
    \emph{the result of Theorem \ref{th:exp} is optimal}. If
    $\vphi$ is a $C^1$ function then $\varphi(H)'=H^2\vphi'(H)$ hence
    $\varphi(H)'$ \emph{cannot be bounded unless}
    $|\varphi'(\lambda)|\leq C\jap{\lambda}^{-2}$.  }\end{example}

\section{Commutators and decay}\label{s:decay}

>From Proposition \ref{pr:com} we get the following decay result.

\begin{proposition}\label{pr:decay1}
  Let $H\in C^1(A)$ such that $H':=[H,\i A]$ commutes with $H$ and let
  $u\in D(H)\cap D(A)$. Then $|\braket{u}{H'\e^{\i t H}u}|\leq
  2|t|^{-1}\|Au\|\|u\|$. In particular, if $H'=B^*B$ for some
  continuous $B:D(H)\to\ch$ commuting with $H$, then
  $|\psi_{Bu}(t)|\leq C_u\jap{t}^{-1}$ for $u\in D(H)\cap D(A)$. If $B$ is
  bounded on $\ch$ then this holds for all $u\in D(A)$.
\end{proposition}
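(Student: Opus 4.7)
The plan is to deduce both inequalities as essentially immediate consequences of the key identity $(\e^{\i tH})' = \i t H' \e^{\i tH}$ furnished by Proposition~\ref{pr:com}. For $u \in D(A)\cap D(H)$, I would pair this identity with $u$ against itself to get
\[
\i t \braket{u}{H' \e^{\i tH} u} = \braket{u}{[\e^{\i tH}, \i A] u}
= \i\bigl(\braket{u}{\e^{\i tH} A u} - \braket{A u}{\e^{\i tH} u}\bigr),
\]
where the two terms on the right are well defined because $u \in D(A)$ and $\e^{\i tH}$ is unitary. Cauchy--Schwarz and unitarity of $\e^{\i tH}$ immediately bound the right-hand side in modulus by $2\|u\|\|Au\|$, and dividing by $|t|$ gives the first assertion.

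For the second assertion, the hypothesis that $B: D(H) \to \ch$ commutes with $H$ means that $B \e^{\i tH} u = \e^{\i tH} B u$ for all $u \in D(H)$ and all $t \in \R$. Hence for $u \in D(A)\cap D(H)$,
\[
\psi_{Bu}(t) = \braket{Bu}{\e^{\i tH} B u}
= \braket{Bu}{B \e^{\i tH} u}
= \braket{u}{B^*B \e^{\i tH} u}_{D(H), D(H)^*}
= \braket{u}{H' \e^{\i tH} u},
\]
so the bound just proved gives $|\psi_{Bu}(t)| \leq 2|t|^{-1}\|Au\|\|u\|$. Combining this with the trivial unitarity bound $|\psi_{Bu}(t)| \leq \|Bu\|^2$ (used for $|t| \leq 1$) produces a single constant $C_u$ making $|\psi_{Bu}(t)| \leq C_u \jap{t}^{-1}$ for all $t$.

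Finally, if $B$ is bounded on $\ch$ then $H' = B^*B$ is bounded, so in particular $H' \ch^1 \subset \ch$, and Proposition~\ref{pr:dense} supplies approximants $u_\varepsilon := R_\varepsilon u \in D(A) \cap D(H)$ converging to $u$ in the graph norm of $A$ for any $u \in D(A)$. Since the bound for $u_\varepsilon$ involves only $\|A u_\varepsilon\|$, $\|u_\varepsilon\|$ and $\|B u_\varepsilon\|$, and each of these is uniformly bounded in $\varepsilon$, while $\psi_{B u_\varepsilon}(t) \to \psi_{Bu}(t)$ pointwise in $t$, the limit $\varepsilon \to 0$ yields the same $\jap{t}^{-1}$ bound for $u \in D(A)$.

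There is no real obstacle here: Proposition~\ref{pr:com} has already done all of the analytic work. The only points requiring care are bookkeeping the duality pairing when $H'$ is viewed as the form $\ch^1 \to \ch^{-1}$, and interpreting the commutation of $B$ with $H$ in the correct functional-analytic sense (namely $B\varphi(H) u = \varphi(H) B u$ for $u \in D(H)$ and bounded Borel $\varphi$), so that $B$ indeed commutes with $\e^{\i tH}$ on $D(H)$.
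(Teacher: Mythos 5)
Your argument is correct and follows essentially the same route as the paper: the first bound comes from pairing the identity $(\e^{\i tH})'=\i tH'\e^{\i tH}$ of Proposition~\ref{pr:com} with $u$ and expanding the commutator form on $D(A)\cap D(H)$, and the second uses exactly the paper's identity $\braket{Bu}{\e^{\i tH}Bu}=\braket{u}{[H,\i A]\e^{\i tH}u}$. The paper leaves the first part as ``obvious'' and the third as a ``consequence'' without detail; your use of Proposition~\ref{pr:dense} (legitimate here since $H'=B^*B$ is bounded) to pass from $D(A)\cap D(H)$ to all of $D(A)$ is the natural way to fill in that omitted limiting step.
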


\proof The first part is obvious. The fact that $B$ commutes with $H$
means $\e^{\i tH}B=B \e^{\i tH}$ for any $t$ and this clearly implies
that $[A,H]$ commutes with $H$. Then $\braket{Bu}{\e^{\i
    tH}Bu}=\braket{u}{[H,\i A]\e^{\i tH}u}$ hence the second and the
third part are consequences of the first one. \qed

\begin{remark}\label{re:classic}{\rm Some of the next results are
    abstract versions of the following estimate: if $H=h(q)$ and
    $A=-p$ in $L^2(\R)$ then $H'=h'(q)$ hence if $|h'|\geq c>0$ and
    $h''/h'^2$ is bounded then an integration by parts gives
    $\left|\int\e^{\i th(x)}|u(x)|^2\d x \right|\leq C_u \jap{t}^{-1}$
    if $u\in D(p)$.  }\end{remark}

We shall say that a densely defined operator $S$ on $\ch$ is
\emph{boundedly invertible} if $S$ is injective, its range is dense,
and its inverse extends to a continuous operator on $\ch$. If $S$ is
symmetric this means that $S$ is essentially self-adjoint and $0$ is
in the resolvent set of its closure.

\begin{proposition}\label{pr:decay2}
  Let $H\in C^1(A)$ such that $H'$ commutes with $H$ and
  $H'D(H)\subset\ch$. Assume that $H'$, when considered as operator on
  $\ch$, is boundedly invertible and $H'^{-1}$ extends to a bounded
  operator of class $C^1(A)$.  Then $|\psi_u(t)|\leq C_u\jap{t}^{-1}$
  if $u\in D(A)$.
\end{proposition}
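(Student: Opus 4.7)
The plan is to use the bounded inverse $T := H'^{-1}$ (extended to all of $\ch$ by hypothesis) to convert the commutator-based estimate for $H' \e^{\i tH}$ coming from Proposition \ref{pr:com} into a decay estimate for $\e^{\i tH}$ itself.

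First I would extract from Proposition \ref{pr:com} the identity
\[ \i t \langle u_1, H' \e^{\i tH} u_2\rangle = \langle A u_1, \e^{\i tH} u_2\rangle - \langle u_1, \e^{\i tH} A u_2\rangle \qquad (\dagger)\]
valid for $u_1,u_2\in D(A)\cap D(H)$, obtained by unfolding $[A,\e^{\i tH}]=\i tH'\e^{\i tH}$ on a pair of vectors in $D(A)\cap D(H)$ and using self-adjointness of $A$. Since $H'D(H)\subset\ch$, Proposition \ref{pr:dense} ensures $D(A)\cap D(H)$ is dense in $D(A)$; each term of $(\dagger)$ is continuous in $u_1$ in the $D(A)$-graph norm (the two terms containing $u_1$ without $A$ because $H'\e^{\i tH}u_2,\e^{\i tH}Au_2\in\ch$ are fixed, the one containing $Au_1$ by definition of the graph norm), so $(\dagger)$ extends to $u_1\in D(A)$, $u_2\in D(A)\cap D(H)$.

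Next, letting $\overline{H'}$ denote the self-adjoint closure of $H'|_{D(H)}$ (which exists and has bounded inverse $T=\overline{H'}^{-1}$ by the hypothesis of bounded invertibility), the operator $T$ commutes with $\e^{\i tH}$ as a bounded Borel function of $\overline{H'}$, since $\overline{H'}$ commutes with $H$ because $H'$ does on the core $D(H)$. For $v\in D(A)\cap D(H)$, the identity $v=\overline{H'}Tv$, self-adjointness of $\overline{H'}$ (used against $e^{\i tH}v\in D(\overline{H'})$) and the commutations then give
\[\psi_v(t)=\langle \overline{H'}Tv, \e^{\i tH}v\rangle = \langle Tv,\e^{\i tH}\overline{H'}v\rangle = \langle Tv,H'\e^{\i tH}v\rangle.\]
Applying $(\dagger)$ with $u_1=Tv\in D(A)$ and $u_2=v\in D(A)\cap D(H)$ gives
\[|t\,\psi_v(t)| \le \|ATv\|\,\|v\| + \|Tv\|\,\|Av\|.\]
Since $T\in C^1(A)$ yields $\|ATv\|\le \|T\|\|Av\|+\|[A,T]\|\|v\|$ and $\|Tv\|\le \|T\|\|v\|$, we obtain $|\psi_v(t)|\le C(\|v\|\|Av\|+\|v\|^2)/|t|$, which together with the trivial bound $|\psi_v(t)|\le\|v\|^2$ gives $|\psi_v(t)|\le C_v\jap{t}^{-1}$ on $D(A)\cap D(H)$.

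Finally, I would promote the bound to $v\in D(A)$ by approximation: Proposition \ref{pr:dense} provides $R_\varepsilon=(1+\i\varepsilon H)^{-1}$ mapping $D(A)$ into $D(A)\cap D(H)$ with $R_\varepsilon\to I$ strongly in the $D(A)$-graph topology, so $\|R_\varepsilon v\|$ and $\|AR_\varepsilon v\|$ remain bounded; the uniform estimate $|\psi_{R_\varepsilon v}(t)|\le C_v\jap{t}^{-1}$ combined with pointwise convergence $\psi_{R_\varepsilon v}(t)\to\psi_v(t)$ concludes. The delicate point is the rewrite $\psi_v(t)=\langle Tv,H'\e^{\i tH}v\rangle$: we cannot guarantee $Tv\in D(H)$, only $Tv\in D(\overline{H'})\cap D(A)$, so it is essential both that $(\dagger)$ be extended past $D(A)\cap D(H)$ in its first slot via Proposition \ref{pr:dense}, and that the self-adjoint closure $\overline{H'}$ be used so that $Tv\in D(\overline{H'})$ suffices to justify all the moves.
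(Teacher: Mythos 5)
Your argument is correct and is essentially the paper's own proof: both rest on the identity $[\e^{\i tH},A]=tH'\e^{\i tH}$ from Proposition \ref{pr:com}, pair it against the bounded extension $K=T$ of $H'^{-1}$ (which commutes with $H$ and is of class $C^1(A)$), and remove the residual domain restriction by regularizing with $R_\varepsilon=(1+\i\varepsilon H)^{-1}$ as in Proposition \ref{pr:dense}. The only divergence is bookkeeping: the paper sidesteps the issue $Tv\nin D(H)$ by observing that $KR_\varepsilon u=R_\varepsilon Ku\in D(H)$, whereas you extend the commutator identity in its first argument by density and invoke the self-adjoint closure $\overline{H'}$ --- both routes work, and the spurious factor of $\i$ and sign flip in your $(\dagger)$ are harmless since only absolute values enter the estimate.
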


\proof From Proposition \ref{pr:com} we get
$[\e^{\i tH},A] =tH'\e^{\i tH}$ as operators $D(H)\to D(H)^*$ hence
$[\e^{\i tH},A]$ is a bounded operator $D(H)\to\ch$ and we have
$[\e^{\i tH},A] H'^{-1}=t\e^{\i tH}$ on the range of $H'$. We denote
$K$ the continuous extension to $\ch$ of $H'^{-1}$ and note that $K$
commutes with $H$ because $H'\e^{\i tH}=\e^{\i tH}H'$ hence
$H'^{-1}\e^{\i tH}=\e^{\i tH}H'^{-1}$ for all $t$. If $u\in D(A)$ and
$Ku\in D(H)$ then $Ku\in D(A)$ because $K\in C^1(A)$ hence
\[
t\psi_u(t)=\braket{u}{[\e^{\i tH},A] Ku} =\braket{\e^{-\i tH}u}{A Ku}
-\braket{Au}{\e^{\i tH} Ku}  .
\]
This implies
\[
|t\psi_u(t)|\leq \|u\|\|AKu\|+\|Au\|\|Ku\|
\leq \|[A,K]\|\|u\|^2+2 \|K\|\|u\|\|Au\| .
\]
Now let $u$
be an arbitrary element of $D(A)$. Let $R_\varepsilon=(1+\i\varepsilon
H)^{-1}$ and $u_\varepsilon=R_\varepsilon u$. Then $u_\varepsilon\in
D(A)$ because $R_\varepsilon D(A)\subset D(A)$ and $Ku_\varepsilon
=R_\varepsilon Ku\in D(H)$ hence we have
\[
|t\psi_{u_\varepsilon}(t)| \leq \|[A,K]\|\|u_\varepsilon\|^2+2
\|K\|\|u_\varepsilon\|\|Au_\varepsilon\| .
\]
Since $[A,R_\varepsilon]=\varepsilon H' R_\varepsilon^2$ we
get $|t\psi_u(t)|\leq \|[A,K]\|\|u\|^2+2 \|K\|\|u\|\|Au\|$
by making $\varepsilon\to0$ in the preceding inequality. \qed

\begin{remark}\label{re:ess}{\rm
We may restate the assumptions of  Proposition \ref{pr:decay2} as
follows: $H$ is of class $C^2(A)$, $H'D(H)\subset\ch$, and $H'$ when
seen as operator on $\ch$ is essentially self-adjoint and $0$ is not
in its spectrum.
}\end{remark}

\begin{remark}\label{re:0}{\rm The good decay
    $\psi_u(t)=\mathcal{O}(t^{-1})$ obtained in Proposition \ref{pr:decay2}
    depends on a quite strong condition on $H'$ which in particular
    forces $H'$ to be an essentially self-adjoint operator on $\ch$
    whose spectrum does not contain zero. In the ``classical'' case
    mentioned in the Remark \ref{re:classic} this means $|h'(x)|\geq
    c>0$ which is rather natural when one has to estimate an integral
    like $\psi(t)=\int \e^{\i th(x)}f(x) \d x$ for large positive $t$:
    \emph{points of stationary phase should be avoided, otherwise we
      cannot expect more than $\psi(t)=\mathcal{O}(t^{-1/2})$.}
  }\end{remark}

We now consider operators satisfying some special commutation
relations but allow $H'$ to have zeros, e.g.\ we treat the simplest
case $H'=cH$. Note that Example \ref{ex:exo} shows that requiring only an algebraic
relation like $[H,\i A]=c H$ is highly ambiguous; the property $H\in
C^1(A)$ is then necessary and is not automatically satisfied.

In many of the applications of the conjugate operator method, see for
example Section \ref{s:apps}, the operator $A$ is unbounded in energy
space. However, it is possible to introduce an energy cut-off for $A$
that does not alter the $C^1(A)$ condition for $H$ and preserves the
behaviour of the commutation relation at thresholds. For instance,
consider $H\in C^1(A)$ bounded from below and such that
$H'=cH$. Define the operators $g(H)=(H+c)^{-1/4}$ and
$\tilde{A}=g(H)Ag(H)$. Then, it is easy to see that
$H\in C^1(\tilde{A})$ and $[H,\i\tilde A]=H'g(H)^2$.

\begin{remark}\label{re:projection} {\rm The subsequent results will
    hold for self evanescent states $u\in\ce$, but they also rely on
    the condition $Au\in\ce$. The latter assumption is not satisfied
    in general, in fact, it is implied by a stronger localization
    condition for $u$. To elude this, it can be assumed that there is
    a projection $P$ which commutes with $H$, and such that
    $u\in\text{Ran}P$. Then the condition $Au\in\ce$ can be replaced
    by $PAu\in\ce$, which is easier to satisfy. This idea will be explored further in forthcoming work. For instance, one can choose $P$ as the projection on the continuous spectrum of $H$ and the proofs presented below can be slightly modified to obtain the same decay
    estimates.}
\end{remark}

\begin{proposition}\label{pr:decay3}
  Let $H\in C^1(A)$ such that $H'=cH$ with $c\in\R\setminus\{0\}$ and
  let $u\in D(A)$ such that $u,Au\in\ce$. Then
  $|\psi_u(t)|\leq C_u\jap{t}^{-1/2}$.
\end{proposition}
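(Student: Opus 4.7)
The plan is to combine Proposition \ref{pr:com} with the elementary identity $\frac{\d}{\d s}(s|\psi_u(s)|^2)=|\psi_u(s)|^2+2s\,\mathrm{Re}(\overline{\psi_u(s)}\psi_u'(s))$ to show that $s|\psi_u(s)|^2$ stays bounded; this will rest on the fact that both $u$ and $Au$ lie in the lifetime space $\ce$.

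First I would apply Proposition \ref{pr:com} with $\varphi(\lambda)=\e^{\i t\lambda}$: since $H'=cH$ automatically commutes with $H$, this yields $(\e^{\i tH})'=\i ct\, H\e^{\i tH}$ as forms on $D(H)$, and combined with $\psi_u'(t)=\i\braket{u}{H\e^{\i tH}u}$ it gives the key identity
\[
\i ct\,\psi_u'(t)=\braket{Au}{\e^{\i tH}u}-\braket{u}{\e^{\i tH}Au}\qquad(u\in D(A)\cap D(H)).
\]
By \eqref{eq:ineq}, the right-hand side lies in $L^2(\R,\d t)$ with $L^2$-norm at most $2[u]_H[Au]_H$, hence $\int_\R t^2|\psi_u'|^2\,\d t\leq (4/c^2)[u]_H^2[Au]_H^2$. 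Under the provisional assumption $u\in D(H)$ (so that $\psi_u\in C^1$), I would then integrate the identity in the plan from $0$ to $T>0$ to obtain
\[
T|\psi_u(T)|^2=\int_0^T|\psi_u|^2\,\d s+2\int_0^T s\,\mathrm{Re}(\overline{\psi_u}\psi_u')\,\d s,
\]
and bound the two terms by $[u]_H^4$ and (via Cauchy--Schwarz and the previous display) by $(4/|c|)[u]_H^3[Au]_H$. A symmetric argument treats $T<0$, and the trivial bound $|\psi_u|\leq\|u\|^2$ handles $|T|\leq 1$; altogether this gives $|\psi_u(T)|\leq C_u\jap{T}^{-1/2}$.

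To remove the provisional assumption $u\in D(H)$, I would approximate by $u_n:=\chi(H/n)u$, with $\chi\in C_\c^\infty(\R)$ satisfying $\chi(0)=1$. Proposition \ref{pr:com} gives $\chi(H/n)\in C^1(A)$ with $[A,\chi(H/n)]=(\i c/n)H\chi'(H/n)$, a bounded operator commuting with $H$ whose norm stays bounded as $n\to\infty$ since $\chi'$ has compact support. Thus $u_n\in D(A)\cap D(H)$, $Au_n=\chi(H/n)Au+[A,\chi(H/n)]u$, and Lemma \ref{lm:scom} yields $[u_n]_H\to[u]_H$ together with $[Au_n]_H\leq C([u]_H+[Au]_H)$ uniformly in $n$. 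Since $u_n\to u$ in $\ch$, $\psi_{u_n}(T)\to\psi_u(T)$ pointwise, and the previous estimate passes to the limit.

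The main obstacle will be precisely this last step: because $u$ need not lie in $D(H)$, $\psi_u$ need not be differentiable and the integration by parts is not \emph{a priori} legal; the approximation must simultaneously preserve $u_n\in D(H)$ and control $[Au_n]_H$ uniformly in $n$. The point that makes it work is that $H'=cH$ forces $[A,\chi(H/n)]$ to have size $O(1)$ in $n$, rather than $O(n)$ as one would expect for a generic $H$-bounded $H'$.
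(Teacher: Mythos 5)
Your proof is correct, and its core coincides with the paper's: both rest on the identity $\i ct\,\psi_u'(t)=\braket{Au}{\e^{\i tH}u}-\braket{u}{\e^{\i tH}Au}$ obtained from Proposition \ref{pr:com}, combined with \eqref{eq:ineq} — which is exactly where the hypotheses $u,Au\in\ce$ enter — to get $\|t\psi_u'\|_{L^2}\leq 2|c|^{-1}[u]_H[Au]_H$. You differ in the two auxiliary steps. First, to convert the $L^2$ bounds on $\psi_u$ and $t\psi_u'$ into the pointwise bound, the paper invokes Corollary \ref{co:psidecay}, whose proof passes through the Fourier transform via Lemma \ref{lm:sqdecay}; your integration of $\frac{\d}{\d s}\bigl(s|\psi_u(s)|^2\bigr)$ is a direct real-variable proof of the same implication (the $p=2$ case of the appendix lemma), shorter and self-contained, and it would equally serve for Corollary \ref{co:kdecay}. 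Second, to remove the provisional assumption $u\in D(H)$ the paper regularizes with $R_\varepsilon=(1+\i\varepsilon H)^{-1}$ and passes to the limit by applying Fatou's lemma to $\|t\psi_{u_\varepsilon}'\|_{L^2}^2$, whereas you use compactly supported spectral cutoffs $\chi(H/n)$ and take a limit pointwise in $T$; your variant is slightly cleaner at the limit stage, since it sidesteps the question of what $\psi_u'$ means when $u\notin D(H)$, and in both versions the decisive observation is the one you single out: the commutator of $A$ with the cutoff is $O(1)$ in the regularization parameter precisely because $H'=cH$, so $[Au_n]_H$ stays bounded by Lemma \ref{lm:scom}.
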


\proof
We have $\psi_u\in L^2(\R)$ because $u\in\ce$ hence, according to
Corollary \ref{co:psidecay}, it suffices to show that the function
$(\delta\psi)(t)=t\psi_u'(t)$ also belongs to $L^2(\R)$. If $u\in
D(|H|^{1/2})$ then $t\psi_u'(t)=\braket{u}{\i t H\e^{\i tH}u}$ so that
by using Proposition \ref{pr:com} we get:
\[
\i ct\psi_u'(t)=\braket{u}{t cH\e^{\i tH}u}
=\braket{u}{[A,\i tH ]\e^{\i tH}u}
=\braket{u}{[A,\e^{\i tH}]u}  .
\]
Then, if $u\in D(A)$ we get
$\i ct\psi_u'(t) =\braket{Au}{\e^{\i tH}u} - \braket{u}{\e^{\i tH}Au}$
hence \eqref{eq:ineq} implies:
\begin{equation}\label{eq:estim}
c^2\|\delta\psi_u\|^2 \leq 2\|\psi_u\|_{L^2}\|\psi_{Au}\|_{L^2}
=2 [u]^2_H [Au]^2_H .
\end{equation}
So the proposition is proved under the supplementary condition $u\in
D(H)$ and the estimate \eqref{eq:estim} depends only on $c$.
Now consider an arbitrary $u\in D(A)$ such that $u,Au\in\ce$ and for
$\varepsilon>0$ let $R_\varepsilon=(1+\i\varepsilon H)^{-1}$.  Then
from Proposition \ref{pr:abg} we get $R_\varepsilon u\in D(A)$ and
$[A,R_\varepsilon] = R_\varepsilon [\i\varepsilon H,A]R_\varepsilon =
c\varepsilon HR^2_\varepsilon$. If we set $u_\varepsilon=R_\varepsilon
u$ then the estimate \eqref{eq:estim} gives
$c^2\|\delta\psi_{u_\varepsilon}\|^2 \leq 2 [u_\varepsilon]^2_H
[Au_\varepsilon]^2_H$. Finally, let $\varepsilon\to0$ and use Fatou's
lemma in the left hand side and Lemma \ref{lm:scom} on the right hand
side to get \eqref{eq:estim} without the condition $u\in D(H)$.  \qed

\begin{theorem}\label{th:decay5}
  Let $H\in C^1(A)$ such that $H'=\theta(H)$ with $\theta$ real of
  class $C^1$ with bounded derivative and such that: (1) if
  $|\lambda|\geq\varepsilon>0$ then $|\theta(\lambda)|\geq
  c_\varepsilon>0$, (2) $\lambda/\theta(\lambda)$ extends to a $C^1$
  function on $\R$.  If $u\in D(A)$ and $u,Au\in\ce$ then
  $|\psi_u(t)|\leq C_u\jap{t}^{-1/2}$.
\end{theorem}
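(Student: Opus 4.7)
\proof
The plan is to mimic the strategy of Proposition~\ref{pr:decay3}. Since $u \in \ce$, we have $\psi_u \in L^2(\R)$, so by Corollary~\ref{co:psidecay} it suffices to show that $\delta\psi_u(t):=t\psi_u'(t)$ belongs to $L^2(\R)$. Set $g(\lambda):=\lambda/\theta(\lambda)$; hypothesis~(2) ensures $g$ extends to a $C^1$ function on $\R$ satisfying $g(\lambda)\theta(\lambda)=\lambda$ identically, and differentiating gives the key algebraic relation $g'\theta+g\theta'=1$. The bounded-derivative hypothesis on $\theta$ combined with~(1) yields the growth bound $|g(\lambda)|\leq C(1+|\lambda|)$.

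The central computation, first carried out for $u \in D(A)\cap D(H)$ of compact spectral support, combines the commutator formula $[A,\e^{\i tH}]=-t\theta(H)\e^{\i tH}$ from Proposition~\ref{pr:com} with the operator identity $H=g(H)\theta(H)$ to obtain
\[
t\psi_u'(t)=\braket{u}{\i tH\e^{\i tH}u}=-\i\braket{g(H)u}{[A,\e^{\i tH}]u}.
\]
Expanding $[A,\e^{\i tH}]u=A\e^{\i tH}u-\e^{\i tH}Au$ and using $[A,g(H)]=\i g'(H)\theta(H)=\i(1-g(H)\theta'(H))$ (again by Proposition~\ref{pr:com}), one rearranges $t\psi_u'(t)$ into a finite linear combination of inner products of the form $\braket{v}{\e^{\i tH}w}$, with $v,w$ drawn from $\{u,\,Au,\,g(H)u,\,g(H)Au,\,g(H)\theta'(H)u\}$, plus a multiple of $\psi_u(t)$. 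Applying inequality~\eqref{eq:ineq} term by term then bounds $\|\delta\psi_u\|_{L^2}$ by a sum of products of the corresponding $[\,\cdot\,]_H$-seminorms.

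To pass to general $u\in D(A)$ with $u,Au\in\ce$, the approximation $u_{R,\varepsilon}:=\chi_R(H)R_\varepsilon u$ is natural, with $R_\varepsilon:=(1+\i\varepsilon H)^{-1}$ and $\chi_R\in C_\rmc^\infty(\R)$ equal to $1$ on $[-R,R]$; Proposition~\ref{pr:dense} applies here because $\theta'\in L^\infty$ implies $H'D(H)\subset\ch$. One applies the preceding estimate to $u_{R,\varepsilon}$ and lets $\varepsilon\to 0$ and then $R\to\infty$, invoking Fatou's lemma on the left-hand side and Lemma~\ref{lm:scom} on the right. The hard step will be the uniform control of the $[g(H)\cdot]_H$-seminorms as $R\to\infty$, since $g$ may be unbounded when $\theta$ itself is bounded (for instance, $\theta=\arctan$). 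The expected resolution is that the relation $g'\theta+g\theta'=1$ enforced by hypothesis~(2) causes the potentially divergent $g$-contributions to combine into quantities bounded by $[u]_H$, $[Au]_H$, and structural constants of $\theta$ alone.
\qed
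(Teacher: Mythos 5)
There is a genuine gap at exactly the point you flag as ``the hard step,'' and the hoped-for resolution does not materialize. Carrying out your central computation with $g=\lambda/\theta$ and $g'\theta=1-g\theta'$ gives
\[
t\psi_u'(t)=-\i\Bigl(\braket{g(H)Au}{\e^{\i tH}u}-\braket{g(H)u}{\e^{\i tH}Au}+\i\braket{g(H)\theta'(H)u}{\e^{\i tH}u}\Bigr)-\psi_u(t)
\]
(up to signs). The identity $g'\theta+g\theta'=1$ has already been spent producing the harmless $\psi_u$ term; the three remaining inner products are genuinely distinct and do not recombine. To apply \eqref{eq:ineq} to them you need $[g(H)u]_H$, $[g(H)Au]_H$, $[g(H)\theta'(H)u]_H<\infty$, and since $g$ grows linearly when $\theta$ is bounded (e.g.\ $\theta=\arctan$, or $\theta(\lambda)=\lambda(1+\lambda)^{-1}$ as in Example \ref{ex:decay4}), these quantities are \emph{not} controlled by $u,Au\in\ce$: one has $[g(H)u]_H^4=2\pi\int g(\lambda)^4E_u'(\lambda)^2\,\d\lambda$, which can diverge even when $\int E_u'^2<\infty$. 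Your truncation $\chi_R(H)R_\varepsilon$ does not rescue this, because $\|g\chi_R(1+\i\varepsilon\cdot)^{-1}\|_\infty$ blows up as $\varepsilon\to0$ after $R\to\infty$; the bound is not uniform in the order of limits you need.

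The paper closes this gap not by cancellation but by an energy decomposition $\psi_u=\psi_{\phi u}+\braket{u}{\phi^\perp\e^{\i tH}u}$ with $\varphi\in C_\c^\infty$ equal to one near zero, $\phi=\varphi(H)$, $\phi^\perp=1-\phi^2$. On the high-energy piece, hypothesis (1) makes $\xi=(1-\varphi^2)/\theta$ a \emph{bounded} $C^1$ function with bounded derivative, and writing $t\phi^\perp\e^{\i tH}=\xi(H)[\e^{\i tH},A]$ yields the stronger pointwise bound $\mathcal{O}(t^{-1})$ requiring only $u\in D(A)$ (no $\ce$ condition at all). On the low-energy piece one may assume $\supp E_u\subset[-1,1]$, so the function $\tilde\eta=\eta\zeta$ (with $\eta$ the $C^1$ extension of $\lambda/\theta$ and $\zeta\in C_\c^\infty$ with $\zeta(H)u=u$) is bounded with $[A,\tilde\eta(H)]$ bounded; then $\tilde\eta(H)u$ and $A\tilde\eta(H)u$ lie in $\ce$ by Lemma \ref{lm:scom}, and your $L^2$ argument via \eqref{eq:ineq} and Corollary \ref{co:psidecay} goes through verbatim. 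So your outline is correct in spirit for the compactly supported part, but without the spectral cutoff the argument cannot be completed under the stated hypotheses.
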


\proof Let $\varphi\in C^\infty_\rmc(\R)$ real and equal to one on a
neighbourhood of zero and let us set
$\phi=\varphi(H),\phi^\perp=1-\phi^2$, so that $\psi_u(t)=\psi_{\phi
  u}(t)+\braket{u}{\phi^\perp\e^{\i tH}u}$. We first show that the
second term is $\mathcal{O}(t^{-1})$. We have $\phi^\perp H'^{-1}=\xi(H)$ with
$\xi(\lambda)=(1-\varphi^2(\lambda))/\theta(\lambda)$ hence
\[
t \braket{u}{\phi^\perp\e^{\i tH}u}
=\braket{\phi^\perp H'^{-1}u}{tH'\e^{\i tH}u}
=\braket{\xi(H)u}{tH'\e^{\i tH}u}=\braket{\xi(H)u}{[\e^{\i tH},A]u}.
\]
Until here $u$ was an arbitrary element of $\ch$. If $u\in D(H)\cap
D(A)$ then we can expand the commutator and get
\begin{align*}
|t \braket{u}{\phi^\perp\e^{\i tH}u}|
&=|\braket{\e^{-\i tH}\xi(H)u}{Au}
- \braket {A\xi(H)u}{\e^{\i{}tH}u}|\\
&\leq \|Au\|\|\xi(H)u\|+\|A\xi(H)u\|\|u\|.
\end{align*}
Since $\xi$ is a bounded function of class $C^1$ with bounded
derivative we can use Proposition \ref{pr:dense} and get
$\xi(H)'=\xi'(H)H'=(\xi'\theta)(H)$. We have
$\xi'\theta=-\theta'/\theta$ outside a compact neighbourhood of zero,
hence $\xi(H)'$ is a bounded operator, so $\xi(H)$ is of class
$C^1(A)$, hence $\xi(H)D(A)\subset D(A)$. Then, since
$H'D(H)\subset\ch$, this estimate remains true for any $u\in D(A)$
by Proposition \ref{pr:dense}. Thus $|\braket{u}{\phi^\perp\e^{\i tH}u}|\leq C_u\jap{t}^{-1}$ for
any $u\in D(A)$.

>From now on we change notations: $\phi u$ will be denoted $u$. So we
may assume $\supp E_u\subset[-1,1]$ and $u,Au\in\ce$, cf.\ Lemma
\ref{lm:scom}, and we want to prove that the function $t\psi_u'(t)$
belongs to $L^2(\R)$. Let $\eta$ be the $C^1$ function on $\R$ which
extends $\lambda/\theta(\lambda)$, let $\zeta\in C^\infty_\rmc(\R)$
such that $\zeta(H)u=u$, and let us set $\tilde\eta=\eta\zeta$. Then
$\tilde\eta(H)u \in D(A)\cap\ce$ and
$A\tilde\eta(H)u=[A,\tilde\eta(H)]u+\tilde\eta(H)Au\in \ce$. Finally
\begin{align*}
-\i t\psi_u'(t) & =\braket{u}{tH\e^{\i tH}u}
=\braket{u}{\eta(H) tH'\e^{\i tH}u}
=\braket{\tilde\eta(H) u}{ [\e^{\i tH},A]\zeta(H) u} \\
& =\braket{\e^{-\i tH}\tilde\eta(H) u}{A\zeta(H)u}
-\braket{A\tilde\eta(H) u}{\e^{\i tH}\zeta(H)u}
\end{align*}
and from \eqref{eq:ineq} we get the square integrability of
$t\psi_u'(t)$. \qed

\begin{example}\label{ex:decay4}
  Let $H\in C^1(A)$ such that $H\geq0$ and $H'=H(1+H)^{-1}$. If $u\in
  D(A)$ and $u,Au\in\ce$ then $|\psi_u(t)|\leq C_u\jap{t}^{-1/2}$.
\end{example}

\section{Higher-order commutators}\label{s:calpha}

The decay estimates obtained so far on $\psi_u(t)$ are at most of
order $\mathcal{O}(t^{-1})$ and it is clear that to obtain  $\mathcal{O}(t^{-k})$ for some
integer $k>1$ we need conditions of the form $u\in D(A^k)$ and
assumptions on the higher-order commutators of $A$ with $H$. We
recall here the necessary formalism.

Let $A$ be a self-adjoint operator on a Hilbert space $\ch$ and
$k\in\N$.  We say that \emph{$S$ is of class $C^k(A)$}, and we write
$S\in C^k(A)$, if the map
$ \R\ni t\mapsto \e^{-\i t A}S\e^{\i tA}S\in B(\ch)$ is of class $C^k$
in the strong operator topology. It is clear that $S\in C^{k+1}(A)$ if
and only if $S\in C^{1}(A)$ and $S'\in C^{k}(A)$. If $S\in C^{2}(A)$
we set $(S')'=S''=S^{(2)}$, etc.  Clearly $C^{k}(A)$ is a
$*$-subalgebra of $ B(\ch)$ and if $S\in B(\ch)$ is bijective and
$S\in C^{k}(A)$ then $S^{-1}\in C^{k}(A)$.

For any $S\in B(\ch)$ let $\ca(S)=[S,\i A]$ considered as a
sesquilinear form on $D(A)$. We may iterate this and define a
sesquilinear form on $D(A^k)$ by:
\[
S^{(k)}\equiv\ca^k(S)=\i^k\sum_{i+j=k} \frac{k!}{i!j!} (-A)^i S A^j.
\]
Then $S\in C^k(A)$ if and only if this form is continuous for the
topology induced by $\ch$ on $D(A^k)$. We keep the notation $\ca^k(S)$
or $S^{(k)}$ for the bounded operator associated to its continuous
extension to $\ch$.

Strictly speaking, the operator $\ca$ acting in $B(\ch)$ must be
defined as the infinitesimal generator of the group of automorphisms
$\cu=\{\cu_t\}_{t\in\R}$ of $B(\ch)$ given by
$\cu_t(S)\equiv \e^{t\ca}(S)=\e^{-\i t A}S\e^{\i tA}$. This group is
not of class $C_0$ and so $\ca$ is not densely defined. Then $C^k(A)$
is just the domain of $\ca^k$. One may also define $C^\alpha(A)$ if
$\alpha$ is not an integer as the Besov space of order
$(\alpha,\infty)$ associated to $\cu$.

We denote $B_1(\ch)$ the Banach algebra of trace class operators on
$\ch$. Its dual is identified with the space $B(\ch)$ of all bounded
operators on $\ch$ with the help of the bilinear form $\Tr(S\rho)$. It
is clear that the restrictions of the $\cu_t$ to
$B_1(\ch)\subset B(\ch)$ give a group of automorphisms of $B_1(\ch)$
and that this group is of class $C_0$. We do not distinguish in
notation between $\cu$ and $\ca$ and their restrictions to $B_1(\ch)$
but note that for example the domain of $\ca$ in $B_1(\ch)$ is the set
of $S\in C^1(A)\cap B_1(\ch)$ such that $\ca(S)\in B_1(\ch)$.
Moreover, if $S=\ket{u}\bra{v}$ and $u,v\in D(A^k)$ then $S$ belongs
to the domain of $\ca^k$ in $B_1(\ch)$.

Now let $H$ be a self-adjoint operator on $\ch$ and
$ R(z)= (H-z)^{-1}$ for $z$ in the resolvent set $\rho(H)$ of $H$.  We
say that \emph{$H$ is of class $C^k(A)$} if $R(z_{0})\in C^k(A)$ for
some $z_{0}\in\rho(H)$; then we shall have $R(z)\in C^k(A)$ for all
$ z\in \rho(H)$ and more generally $\varphi(H)\in C^k(A)$ for a large
class of functions $\varphi$ (e.g. rational and bounded on the
spectrum of $H$).

For each real $m$ let $S^m(\R)$ be the set of symbols of class $m$ on
$\R$, i.e.\ the set of functions $\varphi:\R\to\C$ of class $C^\infty$
such that $|\varphi^{(k)}(\lambda)|\leq C_k\jap{\lambda}^{m-k}$ for
all $k\in\N$. Note that $S^m\cdot S^n\subset S^{m+n}$ and
$\varphi^{(j)}\in S^{m-j}$ if $\varphi\in S^{m}$ and $j\in\N$.

\begin{proposition}\label{pr:kcom}
  Let $H$ be a self-adjoint operator of class $C^1(A)$ with
  $H'=\theta(H)$ for some $\theta\in S^2(\R)$.  Then $H$ is of class
  $C^\infty(A)$. Let $\delta_\theta$ be the first order differential
  operator given by
  $\delta_\theta=\theta(\lambda)\frac{d}{d\lambda}$. If
  $\theta\in S^{1}(\R)$ and $\varphi\in S^{0}(\R)$ then $\varphi(H)$
  is of class $C^\infty(A)$ and
\begin{equation}\label{eq:kderiv}
\ca^k \left( \varphi(H) \right) = \left( \delta_\theta^k\varphi
\right)(H) \ \forall k\in\N.
\end{equation}
\end{proposition}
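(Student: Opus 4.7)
The plan is to prove the formula $\ca^k(\varphi(H)) = (\delta_\theta^k \varphi)(H)$ by induction on $k$, iterating Proposition \ref{pr:com}, and then to verify separately for each of the two claims that the induction hypotheses are preserved at every step.

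For the inductive step, note that $H' = \theta(H)$ automatically commutes with $H$ since both are functions of $H$, so Proposition \ref{pr:com} is available throughout. Assume inductively that $\varphi(H) \in C^k(A)$, that $\ca^k(\varphi(H)) = (\delta_\theta^k \varphi)(H)$ as a bounded operator, and that $\delta_\theta^k \varphi \in C^1_\rmb(\R)$. Applying Proposition \ref{pr:com} to the function $\delta_\theta^k \varphi$ yields $(\delta_\theta^k \varphi)(H) \in C^1(A)$ with $\ca((\delta_\theta^k \varphi)(H)) = ((\delta_\theta^k \varphi)' \theta)(H) = (\delta_\theta^{k+1} \varphi)(H)$, so $\varphi(H) \in C^{k+1}(A)$ provided $(\delta_\theta^{k+1} \varphi)(H)$ is again a bounded operator.

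For the second claim (with $\theta \in S^1$ and $\varphi \in S^0$) the induction succeeds immediately: the derivation $\delta_\theta = \theta \partial_\lambda$ maps $S^0$ into $S^1 \cdot S^{-1} = S^0$, so $\delta_\theta^k \varphi \in S^0 \subset C^\infty_\rmb \subset C^1_\rmb$ for every $k$, and \eqref{eq:kderiv} follows with each $(\delta_\theta^k \varphi)(H)$ automatically bounded.

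The first claim (with $\theta \in S^2$) is harder. Taking $\varphi(\lambda) = (\lambda-z)^{-1}$ for $z \in \rho(H)$, we need $\delta_\theta^k((\lambda-z)^{-1})$ to remain bounded for every $k$; but the naive symbol count only gives $\delta_\theta^k((\lambda-z)^{-1}) \in S^{k-1}$, which is not bounded beyond $k = 1$. The main obstacle is exhibiting the cancellations that occur. I would prove by induction that
\[
\delta_\theta^k\bigl((\lambda-z)^{-1}\bigr) = P_k(\lambda)/(\lambda-z)^{k+1}
\]
with $P_k \in S^{k+1}(\R)$, whence the ratio lies in $S^0 \subset C^1_\rmb$. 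The recursion $P_{k+1} = \theta\cdot[(\lambda-z) P_k' - (k+1) P_k]$, obtained directly from $\delta_\theta = \theta\partial_\lambda$, reduces the induction to the claim that the bracket lies in $S^k(\R)$: this follows because $P_k$, built iteratively from $\theta$ and its derivatives, has leading asymptotics at infinity governed by a power $\lambda^{k+1}$ on which the operator $(\lambda-z)\partial_\lambda - (k+1)$ drops the degree by one. Multiplication by $\theta \in S^2$ then yields $P_{k+1} \in S^{k+2}$, closing the induction. Hence $R(z) \in C^k(A)$ for every $k$ and every $z \in \rho(H)$, which is by definition $H \in C^\infty(A)$.
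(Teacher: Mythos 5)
Your overall strategy --- iterate Proposition \ref{pr:com} while tracking the symbol class of $\delta_\theta^k\varphi$ --- is the same as the paper's, and your treatment of the second claim (where $\theta\in S^1$ makes $\delta_\theta$ preserve $S^0$) coincides with the paper's argument and is correct.

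The first claim, however, contains a genuine gap. Your induction hinges on the assertion that $(\lambda-z)P_k'-(k+1)P_k$ lies in $S^k$ whenever $P_k\in S^{k+1}$, on the grounds that the Euler-type operator $(\lambda-z)\partial_\lambda-(k+1)$ ``drops the degree by one''. That operator annihilates the monomial $\lambda^{k+1}$, but it does not map $S^{k+1}$ into $S^k$: writing $P_k=\lambda^{k+1}g$ with $g\in S^0$ gives $(\lambda\partial_\lambda-(k+1))P_k=\lambda^{k+2}g'$, and $g\in S^0$ only guarantees $g'\in S^{-1}$, so the result is again of genuine order $k+1$ in general (take $g=\sin(\log\jap{\lambda})$); a symbol of order $k+1$ need not equal $c\lambda^{k+1}$ modulo $S^k$. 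Concretely the induction already fails at $k=2$: the recursion gives $P_2=\theta\,\bigl(2\theta-(\lambda-z)\theta'\bigr)$, and $2\theta-\lambda\theta'\in S^1$ holds for $\theta=c\lambda^2$ but not for, say, the elliptic symbol $\theta(\lambda)=\lambda^2\bigl(2+\sin(\log\jap{\lambda})\bigr)\in S^2$, for which $2\theta-\lambda\theta'$ is genuinely of order $2$ and $P_2(\lambda-z)^{-3}$ grows like $\lambda$. So your argument establishes the first claim only for $\theta$ admitting a classical expansion $\theta=c\lambda^2+S^1$ (which does cover the motivating Example \ref{ex:exp}), not for arbitrary $\theta\in S^2$. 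In fairness, you have put your finger on the real difficulty: the paper's own proof iterates the step $\varphi\in S^{-1}\Rightarrow\delta_\theta\varphi\in S^0$ and then says ``etc.'', even though after one step the symbol is only in $S^0$ and a further application of $\delta_\theta$ with $\theta\in S^2$ lands in $S^1$; the same cancellation you tried to exhibit is being used tacitly there. As written, though, your justification of that cancellation is not valid for general $\theta\in S^2$, so the inductive step does not close.
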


\proof We begin with a general remark. By using Proposition
\ref{pr:com} we see that if $H$ is of class $C^1(A)$ and
$H'=\theta(H)$ for some real Borel function $\theta$, and if
$\varphi\in C^1_\rmb(\R)$, then
$\varphi(H)' = \ca(\varphi(H)) = \theta(H)\varphi'(H) =
(\delta_\theta\varphi)(H)$.
In particular, if $\delta_\theta\varphi=\theta\varphi'$ is a bounded
function then $\varphi(H)$ is of class $C^1(A)$.

If we take $\theta\in S^2$ and $\varphi(\lambda)=(\lambda+\i)^{-1}$
then $\varphi\in S^{-1}$ hence $\theta\vphi'\in S^0$. Thus the
operator $R=(H+\i)^{-1}=\varphi(H)$ satisfies $R'=\psi(H)$ with
$\psi\in S^0$. Now we may apply the preceding argument with $\vphi$
replaced by $\psi$ and get $\psi\in C^1(A)$, so $R'\in C^1(A)$,
etc. This proves that $H$ is of class $C^\infty(A)$.

In the preceding argument we clearly may take any $\varphi\in S^{-1}$.
If $\theta\in S^1$ then the same argument works for any
$\varphi\in S^0$ and gives the last assertion of the proposition. \qed

\begin{remark}\label{re:m}{\rm If $\theta\in S^m$ and
    $\varphi\in S^{-(m-1)}$ with $1\leq m\leq 2$ the last assertion of
    the proposition remains true (with the same proof). }\end{remark}

We finish this section with some comments in connection with relation
\eqref{eq:kderiv}. At a formal level \eqref{eq:kderiv} means
\begin{equation}\label{eq:kderivf}
  \e^{-\i tA}\varphi(H)\e^{\i tA} \equiv \e^{t\ca} \left( \varphi(H) \right)
= \left( \e^{t\delta_\theta}\varphi \right)(H) .
\end{equation}
We shall explain without going into details how one may rigorously
interpret this relation and how one may use it to get decay
estimates.

Let $\xi_t$ be the flow of diffeomorphisms of the real line defined by
the vector field
$\delta_\theta=\theta(\lambda)\frac{d}{d\lambda}$. This means that
$\frac{d}{dt}\xi_t(\lambda)=\theta(\xi_t(\lambda))$ and
$\xi_0(\lambda)=\lambda$ for all $\lambda\in\R$ (we assume that such a
global flow exists). Then if $\varphi:\R\to\C$ is a smooth function we
have $\frac{d}{dt}\varphi\circ\xi_t=(\delta_\theta\varphi)\circ\xi_t$
or $\varphi\circ\xi_t=\e^{t\delta_\theta}\varphi$. Hence
\eqref{eq:kderivf} may be written
$\e^{-\i tA}\varphi(H)\e^{\i tA} = (\varphi\circ\xi_t)(H)$.  This can
be easily checked independently of what we have done before.

Let $M(\R)$ be the space of all bounded Borel measures on $\R$.  We
associate to $H$ a continuous linear map $\Phi:B_1(\ch)\to M(\R)$
defined as follows: if $\rho\in B_1(\ch)$ then
$\int \varphi\Phi(\rho)=\Tr(\varphi(H)\rho)$ for any bounded Borel function
$\varphi$. Then
\[
\Tr(\varphi(H)\cu_{-t}(\rho))=\Tr(\e^{-\i tA}\varphi(H)\e^{\i tA}\rho)
=\Tr((\varphi\circ\xi_t)(H)\rho)
\]
which means that the measure $\Phi(\cu_{-t}(\rho))$ is equal to the image
of the measure $\Phi(\rho)$ through the map $\xi_t$. Or, if we denote
$V_t$ the map $M(\R)\to M(\R)$ which sends a measure $\mu$ into its
image $\xi_t^*(\mu)$ through $\xi_t$, we have
$\Phi\circ\cu_{-t}=V_t\circ\Phi$.

Thus, if $\rho$ belongs to the Besov space $B_1(\ch)_{s,p}$ associated
to the group of automorphisms $\cu_t$ of $B_1(\ch)$ then $\Phi(\rho)$
belongs to the Besov space $M(\R)_{s,p}$ associated to the group of
automorphisms $V_t$ of $M(\R)$ (notations as in \cite{ABG}).  This
gives smoothness properties of the measure $\Phi(\rho)$ with respect
to the differential operator $\delta_\theta$ in terms of smoothness
properties of $\rho$ with respect to the operator $A$.  In particular,
since $\Tr(\e^{\i tH}\rho)=\int \e^{\i t\lambda}\Phi_\rho(\d\lambda)$
is just the Fourier transform of the measure
$\Phi_\rho\equiv\Phi(\rho)$, this allows us to control the decay as
$t\to\infty$ of $t\mapsto\Tr(\e^{\i tH}\rho)$ in terms of the local
behaviour of the measure $\Phi_\rho$.

The operators $V_t$ can be explicitly computed in many situations and
the preceding strategy gives optimal results. For example, in the
simplest case $[H,\i A]=1$ we get for any $s>0$
\begin{equation}\label{eq:1}
\|\jap{A}^{-s}\e^{\i tH}\jap{A}^{-s}\|\leq C_s\jap{t}^{-s}
\end{equation}
If $[H,\i A]=H$ then such a good decay is impossible because zero is a
threshold (see Remark \ref{re:count}).  On the other hand, if $\eta$
is a smooth function equal to zero near zero and to one near infinity
then (see \cite{BGS}):
\begin{equation}\label{eq:2}
  \|\jap{A}^{-s}\e^{\i tH}\eta(H)\jap{A}^{-s}\|\leq C_s\jap{t}^{-s} .
\end{equation}
Estimates of this nature hold in fact for a large class of commutation
relations $[H,\rmi A]=\theta(H)$.  Moreover, if the function $\eta$ is
of compact support and such that a strict Mourre estimate holds on a
neighbourhood of its support then
$\jap{A}^{-s}\e^{\i tH}\eta(H)\jap{A}^{-s}$ may be controlled in terms
of the regularity of the boundary values of the resolvent
$R(\lambda\pm\rmi0)$ via a Fourier transformation argument. The higher-order continuity properties of the operators $R(\lambda\pm\rmi0)$ as
functions of $\lambda$ in a region where one has a strict Mourre
estimate has been studied by commutator methods first in \cite{JMP}
and then in \cite{BouG} where the optimal regularity result has been
obtained. This gives the following decay (see \cite{BGS}): if the
self-adjoint operator $H$ has a spectral gap and is of class
$C^{s+1/2}(A)$ for some real $s>1/2$, and if $\eta$ is a $C^\infty$
function with compact support in an open set where $A$ is strictly
conjugate to $H$, then there is a number $C$ such that
\begin{equation}\label{eq:hor}
\|\jap{A}^{-s}\e^{\i tH}\eta(H)\jap{A}^{-s}\|\leq C_s\jap{t}^{-(s-1/2)} \,.
\end{equation}
This decay is the best possible for $H$ of class $C^{s+1/2}(A)$. This may
be compared with the corresponding result in \cite[p.\ 222]{JMP} but
one should take into account the remark in \cite[p.\ 13]{BouG}. Note
that \eqref{eq:hor} is an endpoint estimate and it can be improved by
interpolation at intermediary points. For example, if
$H\in C^\infty(A)$ and $s>\varepsilon>0$ then we have
\begin{equation*}
\|\jap{A}^{-s}\e^{\i tH}\eta(H)\jap{A}^{-s}\|\leq
C_{s,\varepsilon}\jap{t}^{-(s-\veps)} \,.
\end{equation*}
The problem with these estimates is that the cutoff function $\eta$
eliminates the thresholds of $H$, i.e. exactly the energies in which
we are interested.  We have explained before that the behaviour of
$\|\jap{A}^{-s}\e^{\i tH}\jap{A}^{-s}\|$ may be very bad because of
the thresholds.

We emphasize that in this article we are mainly interested in global
estimates which take into account the existence of thresholds. In
order to get some decay we consider self evanescent states
$u\in\ce(H)$ and show how one can get a better decay of the physically
meaningful quantity $\psi_u(t)$.  Our results are obtained by a direct
study of the evolution operator $\rme^{\rmi tH}$ and do not involve
regularity properties of the resolvent of $H$.

\section{Higher-order decay}\label{s:hod}

The expressions $\psi_u(t)=\braket{u}{\e^{\i tH}u}$ that we considered
until now are quadratic in $u$ and this complicates the computations of higher order.  To elude this we note that
$\psi_u(t)=\Tr(\e^{\i tH} \rho)$ with $\rho=\ket{u}\bra{u}$,
expression which makes sense for any $\rho\in B_1(\ch)$ and is linear
in $\rho$.

We begin with an extension to higher orders of Proposition
\ref{pr:decay2}.

\begin{theorem}\label{th:kbdd}
  Let $k\in\N$ and $s\in[0, k]$ real. Assume that $H$ is of class
  $C^{k+1}(A)$ and $H'$ commutes with $H$, satisfies
  $H'D(H)\subset\ch$, and is boundedly invertible. Then for each
  vector $u\in D(|A|^s)$ we have $\psi_u(t)=\mathcal{O}(t^{-s})$.
\end{theorem}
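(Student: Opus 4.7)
The plan is to iterate the identity from Proposition~\ref{pr:decay2} and then interpolate to cover fractional $s$. Let $K$ be the bounded extension of $H'^{-1}$; since $H'$ commutes with $H$, so does $K$, and in particular $K$ commutes with $\rme^{\rmi tH}$. By Proposition~\ref{pr:com} (applied to $H\in C^1(A)$ with $H'$ commuting with $H$) one has $(\rme^{\rmi tH})'=\rmi t H'\rme^{\rmi tH}$, i.e.\ $[\rme^{\rmi tH},A]=tH'\rme^{\rmi tH}$ as a form on $D(H)$. Multiplying on the right by $K$ and using $H'K=\Id$ together with $[K,\rme^{\rmi tH}]=0$ gives the fundamental identity
\[
t\,\rme^{\rmi tH}\;=\;[\rme^{\rmi tH},A]\,K\;=\;[\rme^{\rmi tH},M],\qquad M:=AK.
\]

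The first step is to verify that $K\in C^k(A)$ under the hypothesis $H\in C^{k+1}(A)$. Formally, differentiating $KH'=\Id$ yields $\ca(K)=-K\ca(H')K$, and iterating expresses each $\ca^j(K)$, $j\le k$, as a noncommutative polynomial in $K$ and in the higher commutators of $H$; the latter are themselves controlled by $\ca^{k+1}(R(z))$ via the Leibniz expansion of $\ca^{j}(R(z))=\ca^{j}((H-z)^{-1})$, and the flanking $K$'s absorb the unboundedness of the inner factors so that each $\ca^j(K)$ is a bounded operator on $\ch$. Making these manipulations rigorous requires a regularisation of the kind $R_\varepsilon=(1+\rmi\varepsilon H)^{-1}$ already used in Propositions~\ref{pr:dense} and~\ref{pr:decay3}; this is the most delicate part of the argument.

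Given $K\in C^k(A)$, set $\mathrm{ad}_M(S):=[S,M]$. Induction on $j\le k$ applied to the fundamental identity produces the sesquilinear form identity on $D(A^j)$
\[
t^{j}\,\rme^{\rmi tH}\;=\;\mathrm{ad}_M^{\,j}(\rme^{\rmi tH})\;=\;\sum_{\ell=0}^{j}(-1)^{j-\ell}\binom{j}{\ell}\,M^{j-\ell}\,\rme^{\rmi tH}\,M^{\ell}.
\]
For $u\in D(A^j)$ the commutator expansion $AK=KA+[A,K]$, combined with $K\in C^j(A)$, gives $\|M^\ell u\|\le C\|u\|_{D(A^\ell)}$ and analogously $\|(M^*)^\ell u\|\le C\|u\|_{D(A^\ell)}$, since $M^*=KA$ and $K^*=K$. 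Taking $j=k$ and pairing with $u\in D(A^k)$,
\[
|t^{k}\,\psi_u(t)|\;\le\;\sum_{\ell=0}^{k}\binom{k}{\ell}\|(M^*)^{k-\ell}u\|\,\|M^{\ell}u\|\;\le\;C_k\,\|u\|_{D(A^k)}^{2},
\]
so that $\psi_u(t)=\mathcal{O}(t^{-k})$.

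Finally, to treat fractional $s\in[0,k]$ I interpolate on the bilinear form $B_t(u,v):=\braket{v}{\rme^{\rmi tH}u}$. The trivial endpoint $|B_t(u,v)|\le\|u\|\|v\|$ together with the endpoint just obtained, $|B_t(u,v)|\le C_k\,\jap{t}^{-k}\|u\|_{D(A^k)}\|v\|_{D(A^k)}$, and bilinear complex interpolation along the Hilbert scale $\{D(|A|^\sigma)\}_{0\le\sigma\le k}$ yield
\[
|B_t(u,v)|\;\le\;C_s\,\jap{t}^{-s}\,\|u\|_{D(|A|^s)}\,\|v\|_{D(|A|^s)}
\]
for every $s\in[0,k]$; specialising to $v=u$ gives $\psi_u(t)=\mathcal{O}(\jap{t}^{-s})$ for every $u\in D(|A|^s)$.
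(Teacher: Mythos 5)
Your proof is correct and follows essentially the same route as the paper: the paper also passes to $K$ (the bounded extension of a multiple of $H'^{-1}$), iterates the identity $t\,\e^{\i tH}=\ca(\e^{\i tH})K$ $k$ times (written there as $(R_K\ca)^k(\e^{\i tH})=t^k\e^{\i tH}$ in the trace-class formalism with $\rho=\ket{u}\bra{u}$, which is your $\mathrm{ad}_{AK}^k$ computation in different bookkeeping), bounds the result by $C_k(\|u\|+\|A^ku\|)^2$, and interpolates to reach fractional $s$. The one step you flag as delicate, $K\in C^k(A)$, is asserted with no more detail in the paper than in your sketch.
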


\proof By an interpolation argument it suffices to prove
$|\psi_u(t)|\leq C_k(\|u\|+\|A^ku\|)^2\jap{t}^{-k}$ for $u$ in a dense
subspace of $D(A^k)$.  Formally this is quite straightforward starting
with the formula $(\i tH')^{-1}\ca(\e^{\i tH})=\e^{\i tH}$ and then
iterating it $k$ times; we next sketch the rigorous proof. We change
slightly the notations from the proof of Proposition \ref{pr:decay2}
and denote $K$ the continuous extension to $\ch$ of $-\i H'^{-1}$.
Then $K$ commutes with $H$, is of class $C^k(A)$, and we have
$K\ca(\e^{\i tH})=\ca(\e^{\i tH})K=t\e^{\i tH}$.  Let $u\in D(A^k)$
and $\rho=\ket{u}\bra{u}$ or a more general trace class operator. Let
$L_K$ and $R_K$ be the operators of right and left multiplication by
$K$, which act both in $B(\ch)$ and in $B_1(\ch)$.  Then
$R_K\ca(\e^{\i tH})=t\e^{\i tH}$ hence
\begin{align*}
t\psi_u(t) &=\Tr\left(( R_K\ca)(\e^{\i tH}) \rho \right)
=\Tr\left(\ca(\e^{\i tH})(K\rho) \right) \\
&=-\Tr\left(\e^{\i tH}\ca(K\rho) \right)
=-\Tr\left(\e^{\i tH}(\ca L_K)\rho \right)
\end{align*}
This is easy to justify since $Ku \in D(A^k)$ because $K$ is of class
$C^k(A)$. In exactly the same way, starting with $( R_K\ca)^k(\e^{\i
  tH})=t^k\e^{\i tH}$ we get
\[
t^k\psi_u(t) =\Tr\left(( R_K\ca)^k(\e^{\i tH}) \rho \right)
=(-1)^k\Tr\left(\e^{\i tH}(\ca L_K)^k\rho \right) .
\]
Finally, it remains to note that
$\|(\ca L_K)^k\rho\|_{B_1(\ch)}\leq C_k(\|u\|+\|A^ku\|)^2$. \qed

\begin{remark}\label{re:count}{\rm The following example shows that
    such a good decay as in Theorem \ref{th:kbdd} cannot be expected
    if $H'$ is not boundedly invertible. In the Hilbert space
    $\ch=L^2(0,\infty)$ let $H$ be the operator of multiplication by
    the independent variable $x$ and let $A$ be the self-adjoint
    realization of $\frac{\i}{2}(x\frac{d}{dx}+\frac{d}{dx}x)$.  Then
    $H$ is of class $C^\infty(A)$ and $H'=[H,\i A]=H$. Let $u$ be a
    $C^\infty$ function on $(0,\infty)$ which is zero for $x>2$ and
    equal to $x^{-\theta}$ for $x<1$ with $0<\theta<1/2$. Then
    $u\in D(|A|^s)$ for all $s>0$ but
    $\psi_u(t)\sim\int_0^1\e^{\i tx}x^{-2\theta}\d x \sim
    t^{2\theta-1}$
    for $t\to\infty$, hence the decay can be made as bad as possible.
    On the other hand, Example \ref{ex:eva} explains why the space
    $\ce$ helps to improve the behaviour.  }\end{remark}

We now give a higher-order version of Theorem \ref{th:decay5}. Recall
that $\theta\in S^m(\R)$ is an \emph{elliptic symbol} if there is
$c>0$ such that $|\theta(\lambda)|\geq c |\lambda|^m$ near
infinity. Then $\eta/\theta\in S^{-m}(\R)$ for any $C^\infty$ function
$\eta$ with support in the region where $\theta\neq0$ and equal to one
near infinity.

\begin{theorem}\label{th:kzero}
  Let $H\in C^{1}(A)$ such that $H'=\theta(H)$ for some
  elliptic symbol $\theta\in S^m(\R)$ with $0\leq m\leq 1$. Assume:
  (1) $\theta(\lambda)\neq0$ if $\lambda\neq0$ and (2)
  $\lambda/\theta(\lambda)$ extends to a $C^\infty$ function on $\R$.
  Let $k$ be an odd integer and let $u\in\ch$ be of the form
  $|H|^{(k-1)/4}v$ for some $v\in D(A^k)$ such that $A^jv\in\ce$ for
  $0\leq j\leq k$. Then $|\psi_u(t)|\leq C_u\jap{t}^{-k/2}$.
\end{theorem}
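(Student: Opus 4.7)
The theorem is a higher-order generalization of Theorem \ref{th:decay5}; I would follow the same three-part structure, namely a high-energy reduction via Theorem \ref{th:kbdd}, a weighted Sobolev embedding on the dilation scale, and $L^2$ estimates derived from iterated commutators.

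\emph{Step 1: Low-energy reduction.} Split $u = \phi(H) u + (1-\phi^2(H)) u$ with $\phi \in C_\rmc^\infty(\R)$ equal to $1$ near $0$. On $\supp(1-\phi^2)$, $|\theta|$ is bounded below by the ellipticity assumption and condition (1), so $H'$ is boundedly invertible there and Theorem \ref{th:kbdd} applied with $s = k$ yields $\mathcal{O}(\jap{t}^{-k})$ decay for the high-energy piece, much stronger than required. We may thus assume $v$ has compact spectral support near $0$ and fix $\zeta \in C_\rmc^\infty(\R)$ with $\zeta(H) v = v$.

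\emph{Step 2: Sobolev embedding on the dilation scale.} Set $p = (k-1)/2 \in \N$. A change of variable $s = \log|t|$ reduces the pointwise bound $|\psi_u(t)| \leq C_u \jap{t}^{-k/2}$ (for $|t| \geq 1$) to the standard $H^1(\R_s) \hookrightarrow L^\infty$ Sobolev embedding applied to $h(s) = \psi_u(\e^s)\e^{(p+1/2)s}$. It therefore suffices to establish the two $L^2$ bounds
\[
t^p \psi_u(t) \in L^2(\R, \dd t) \qquad \text{and} \qquad t^{p+1} \psi_u'(t) \in L^2(\R, \dd t).
\]

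\emph{Step 3: $L^2$ bounds via iterated commutators.} Since $|H|^{(k-1)/4}$ commutes with $\e^{\i tH}$ one has $\psi_u(t) = \braket{v}{|H|^p \e^{\i tH} v}$. Using $\lambda = \eta(\lambda)\theta(\lambda)$ and the formula $\mathcal{A}^q(\e^{\i tH}) = (\delta_\theta^q \e^{\i t\cdot})(H)$ of Proposition \ref{pr:kcom}, an induction in $q$ gives
\[
t^q H^q \e^{\i tH} = c_q\, \eta(H)^q\, \mathcal{A}^q(\e^{\i tH}) + \sum_{m < q} t^m\, Q_{q,m}(H)\, \e^{\i tH},
\]
where $c_q$ is a constant and each $Q_{q,m}$ is a polynomial in $\theta,\theta',\ldots,\eta,\eta',\ldots$ vanishing at $\lambda = 0$ to order at least $m$: in each monomial of the coefficient of $t^m$ in $\delta_\theta^q \e^{\i t\lambda}$, $m$ of the $\theta$-factors are plain copies of $\theta$ (and thus vanish at $0$, since $\theta(0)=0$ in the interesting case). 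For the leading term $c_p \braket{v}{\sign(H)^p \eta(H)^p \mathcal{A}^p(\e^{\i tH}) v}$, expanding $\mathcal{A}^p(\e^{\i tH}) = \i^p \sum_j \binom{p}{j}(-A)^j \e^{\i tH} A^{p-j}$ and moving $A^j$'s to the left of the inner product by self-adjointness produces a finite sum of terms of the form $\braket{A^j w_1}{\e^{\i tH} A^{p-j} w_2}$ with $w_i = \rho_i(H) v$ and $\rho_i$ smooth bounded on $\supp\zeta$. Each such term lies in $L^2(\R, \dd t)$ by \eqref{eq:ineq}, Lemma \ref{lm:scom}, and the hypothesis $A^j v \in \ce$ for $0 \leq j \leq k$. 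For each lower-order-in-$t$ remainder, the extra factor $H^m$ in $Q_{p,m}$ lets us rewrite $t^m \braket{v}{\text{bdd}(H) H^m \e^{\i tH} v}$ and rerun the same expansion on $t^m H^m \e^{\i tH}$; after finitely many iterations we are left with a term of the form $\braket{v}{\text{bdd}(H) \e^{\i tH} v}$, which lies in $L^2$ directly by \eqref{eq:ineq}. The estimate on $t^{p+1}\psi_u'$ is identical except for one additional power of $H$ coming from $\partial_t \e^{\i tH} = \i H \e^{\i tH}$; this is where the hypothesis $A^j v \in \ce$ is needed up to $j = 2p+1 = k$.

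\emph{Main obstacle.} The principal technical subtlety is the presence of the non-smooth factor $\sign(H)^p$ arising from $|H|^p = \sign(H)^p H^p$ when $p$ is odd, which is not of class $C^1(A)$. The resolution is to keep $\sign(H)^p$ strictly as a bounded self-adjoint operator inside an inner product, never moving an $A$-power through it: the $A^j$'s are transferred to the other side of the inner product onto $v$ or onto $\rho_i(H) v$ with $\rho_i$ smooth. A spectral approximation $\chi_\varepsilon(H) v \to v$ with $\chi_\varepsilon$ supported away from $0$ justifies the algebra rigorously, and the bounds are uniform in $\varepsilon$ thanks to the factor $|H|^{(k-1)/2}$ in $\psi_u$, which compensates for the sign singularity at $\lambda = 0$.
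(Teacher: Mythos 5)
Your proposal is correct in outline and follows essentially the same route as the paper: a high/low energy splitting, reduction of the low-energy piece to the two $L^2$-in-$t$ bounds on $t^{(k-1)/2}\psi_u$ and $t^{(k+1)/2}\psi_u'$ (your log-variable Sobolev embedding is just a rederivation of Corollary \ref{co:kdecay}), conversion of $t^qH^q\e^{\i tH}$ into iterated commutators $\ca^j(\e^{\i tH})$ via $\lambda=\eta(\lambda)\theta(\lambda)$ and Proposition \ref{pr:kcom}, and transfer of the $A$-powers onto $v$ so that \eqref{eq:ineq} and the hypothesis $A^jv\in\ce$ apply; you even flag the $\mathrm{sgn}(H)^p$ issue that the paper passes over silently. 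The only imprecision is the high-energy step: Theorem \ref{th:kbdd} cannot be invoked verbatim since $H'=\theta(H)$ is not globally boundedly invertible and $A$ does not preserve the spectral subspace away from zero, but the localized identity your Step 3 derives (the paper's \eqref{eq:ind}, with $a\in S^0_{(0)}$ and $a/\theta$ bounded) is exactly the correct substitute and yields the claimed $\mathcal{O}(t^{-k})$ decay there.
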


\proof
Denote $S^0_{(0)}(\R)$ the set of $a\in S^0(\R)$ such that
$a(\lambda)=0$ near zero.  We first prove the following: if $n\in\N$
and $a\in S^0_{(0)}(\R)$ then there are $a_0,a_1,\dots,a_n\in S^0(\R)$
such that:
\begin{equation}\label{eq:ind}
t^na(H)\e^{\i tH}=\sum_{j=0}^n \ca^j\left( a_j(H) \e^{\i tH} \right)  .
\end{equation}
Of course, the $a_j$ also depend on $n$.  If $n=1$ we write (see also the
proof of Theorem \ref{th:decay5}):
\begin{equation}\label{eq:in1}
ta(H)\e^{\i tH}= -\i \frac{a(H)}{\theta(H)}\ca\left(\e^{\i tH}
\right)=\ca\left(a_1(H)\e^{\i tH} \right)+a_0(H) \e^{\i tH}
\end{equation}
where $a_1=\frac{a}{\i\theta}$ and $a_0=-\theta a_1'$ (use Proposition
\ref{pr:kcom}). We mention that we use without comment the relation
$\ca(ST)=\ca(S)T+S\ca(T)$ with the further simplification that in our
context $S$ and $T$ are functions of $H$ hence commute.  Now assume
\eqref{eq:ind} is true and let us prove it with $n$ replaced by
$n+1$. Let $b\in C^\infty$ equal to zero near zero and to $1$ near
infinity and such that $a_j=a_jb$ for all $j$. Then
\[
t^{n+1}a(H)\e^{\i tH}=\sum_{j=0}^n \ca^j\left( a_j(H) tb(H)\e^{\i tH} \right)
\]
Now we use \eqref{eq:in1} and replace
$tb(H)\e^{\i tH}=\ca\left(b_1(H)\e^{\i tH} \right)+b_0(H) \e^{\i
  tH}$. Thus
\begin{align*}
  a_j(H) tb(H)\e^{\i tH} &=a_j(H) \ca\left(b_1(H)\e^{\i tH}\right)
+a_j(H) b_0(H) \e^{\i tH} \\
&=\ca\left(a_j(H)b_1(H)\e^{\i tH}\right)
+\left(a_j(H) b_0(H) - \ca(a_j(H))b_1(H)\right)\e^{\i tH}
\end{align*}
which clearly gives the required result.

Now we begin the proof of the theorem. As in the proof of Theorem
\ref{th:decay5} we consider separately the case when $u$ is zero near
energy zero and that when $u=\varphi(H)u$ for some $\varphi\in
C_\rmc^\infty$. The first case is an immediate consequence of
\eqref{eq:ind} because there is $a\in S^0_{(0)}(\R)$ such that
$a(H)u=u$ hence (recall the notation $\rho=\ket{u}\bra{u}$)
\begin{equation}\label{eq:triv}
t^k\braket{u}{\e^{\i tH}u}=\Tr\left(t^ka(H)\e^{\i tH}  \rho\right)
=\sum_{j=0}^k (-1)^j \Tr\left(a_j(H) \e^{\i tH} \ca^j(\rho)\right)
\end{equation}
which implies $\psi_u(t)=\mathcal{O}(t^{-k})$ because obviously
$\rho\in D(\ca^k)$ if $u\in D(A^k)$.

Note that the facts established above hold for an arbitrary
$u\in\ch$. The condition involving $v$ is needed to have some control
on the behavior of $u$ at zero energy, which cannot be arbitrary as
explained in Remark \ref{re:count}. When we localize near zero energy
we replace $u$ by $\varphi(H)u$ with $\varphi\in C^\infty_\rmc$ equal
to one on a neighbourhood of zero. If $u=|H|^m v$ with $m=(k-1)/4$ and
$v\in D(A^k)$ such that $A^jv\in\ce$ for $0\leq j\leq k$ then
$\varphi(H)u=|H|^m \varphi(H)v$.  By Proposition \ref{pr:kcom} $H$ is
of class $C^\infty(A)$ so $\varphi(H)D(A^j)\subset D(A^j)$ for any $j$
and $A^j\varphi(H)v\in\ce$ by Lemma \ref{lm:scom}.

Thus for the rest of the proof we may assume that the support of $u$
in a spectral representation of $H$ is included in $[-1,1]$ and
$u=|H|^m v$ with $v\in D(A^k)$ such that $A^jv\in\ce$ for
$0\leq j\leq k$.  It is clear that $v$ has the same $H$-support as
$u$.  Our purpose is to check the assumptions of the Corollary
\ref{co:kdecay} for $\psi=\psi_u$.  There are two conditions to be
verified: the functions $t^{\frac{k-1}{2}}\psi_u(t)$ and
$t^{\frac{k+1}{2}}\psi'_u(t)$ should be in $L^2(\R)$. We treat only
the second one, the first is treated similarly.  If
$\ell=2m+1=(k+1)/2$ then
\[
t^\ell\psi_u'(t)=\braket{u}{\i t^\ell H \e^{\i tH}u}
=\braket{|H|^mv}{\i t^\ell H \e^{\i tH}|H|^mv}
=\braket{v}{\i t^\ell H^\ell{\rm sgn}^{\ell+1}(H) \e^{\i tH}v}.
\]
Let $\eta$ be a $C^\infty$ function with compact support such that
$\eta(\lambda)=\lambda/\theta(\lambda)$ on $[-1,1]$. Then
$\lambda=\eta(\lambda)\theta(\lambda)$ on $[-1,1]\setminus\{0\}$ hence
on $[-1,1]$ so we have
\[
\i^{\ell-1} t^\ell\psi_u'(t) =\braket{\eta(H)^\ell v}{\i^\ell t^\ell \theta(H)^\ell
  \e^{\i tH}v} =\braket{\eta(H)^\ell v}{(\i tH')^\ell \e^{\i tH}v}.
\]
Recall that we have $\ca\left(\e^{\i tH}\right)=\i tH'\e^{\i tH}$ in a
sense described in Proposition \ref{pr:com}. But under the present
conditions we have much more because $H'D(H)\subset\ch$ hence
$\e^{\i\tau A}$ leaves invariant the domain of $H$ and induces there a
$C_0$-group (see the assertion (2) page \pageref{p:2}). In particular,
the set of $u\in D(H)\cap D(A^j)$ such that $A^ju\in D(H)$ for any
$j\in\N$ is dense in $D(H)$ (and is a core for $A$). Moreover, the
$\ca^j(H)$ are bounded operators if $j\geq2$.  This allows us to
compute $\ca^\ell\left(\e^{\i tH}\right)$ inductively as usual.  Our
next computations look slightly formal but it is straightforward,
although a little tedious, to rigorously justify each step.

Above we fixed $\ell$ to the value $(k+1)/2$ but now we allow it to
take any value smaller than this one. For the case $\ell=1$ see the
proof of Theorem \ref{th:decay5}. For $\ell=2$ we write
\begin{align*}
  \ca^2\left(\e^{\i tH}\right) &=\ca\left(\i tH'\e^{\i tH}\right)
=\i tH''\e^{\i tH} +(\i tH')^2\e^{\i tH} \\
&=\frac{H''}{H'} \i tH'\e^{\i tH} +(\i tH')^2\e^{\i tH}
=\frac{H''}{H'} \ca\left(\e^{\i tH}\right) +(\i tH')^2\e^{\i tH}
\end{align*}
By ``localising'' Proposition \ref{pr:kcom} we get
$H''=\ca(H')=\ca(\theta(H))=\theta(H)\theta'(H)=H'\theta'(H)$ hence
$\frac{H''}{H'}=\theta'(H)$. Thus
\[
(\i tH')^2\e^{\i tH} = \ca^2\left(\e^{\i tH}\right) -\theta'(H)
\ca\left(\e^{\i tH}\right) .
\]
Then if we set $\rho=\ket{\eta^2(H)v}\bra{v}$ we get
\begin{align*}
  \i t^2\psi_u'(t) & =\Tr\left( (\i tH')^2\e^{\i tH} \rho\right)
=\Tr\left( \ca^2\left(\e^{\i tH}\right) \rho\right)
  -\Tr\left( \theta'(H) \ca\left(\e^{\i tH}\right) \rho\right)   \\
& =\Tr\left( \e^{\i tH} \ca^2(\rho)\right)
  -\Tr\left(\e^{\i tH} \ca(\rho \theta'(H) ) \right).
\end{align*}
The right hand side belongs to $L^2(\R)$ by the argument from Theorem
\ref{th:decay5}, which finishes the proof in the case $\ell=2$.
The general case does not involve any new idea: by writing
conveniently $\frac{H^{(\ell)}}{H'}$ one may express
$(\i tH')^\ell \e^{\i tH}$ as a linear combination of functions of $H$
times commutators $\ca^j\left(\e^{\i tH}\right)$ and one may proceed
as above. \qed

\section{Applications}\label{s:apps}

We will use the previous results to obtain decay estimates for
$\psi_u(t)=\braket{u}{\e^{\i tH}u}$ in several situations. Note that
Example \ref{ex:exo} and Proposition \ref{pr:abg} show that the
commutation relation is not enough to prove the $C^1(A)$ condition for
$H$. For instance, in addition to the continuity of $[A,H]_0$ on
$D(A)\cap D(H)$, it suffices to verify the invariance of domain
$R(z)D(A)\subset D(A)$. In other cases it is convenient to verify the
simplified assumptions of Mourre \cite{Mo}, which are stronger than
the $C^1(A)$ property \cite{ABG}:

\begin{itemize}
\item[(a)]$e^{\i\theta A}D(H)\subset D(H)$
\item[(b)]There is a subspace $\mathscr{S}\subset D(A)\cap D(H)$ which
  is a core for $H$ such that
  $e^{\i\theta A}\mathscr{S}\subset\mathscr{S}$ and the form
  $[H,\i A]$ on $\mathscr{S}$ extends to a continuous operator
  $D(H)\to\ch$.
\end{itemize}

Recall the subspace $\ce=\{u\in\ch \mid [u]_H<\infty\}$, where
$[u]_H= \left(\int_\R |\psi_u(t)|^2 \d t \right)^{1/4}.$

\subsection{Example 1:\textnormal{ Laplacian in $\R^n$.}}$ $

Let $H=-\Delta$ in $L^2(\mathbb{R}^n)$ with domain the Sobolev space
$\ch^2(\mathbb{R}^n)$ and $A=-\tfrac{\i}{2}(x\cdot\nabla+\nabla\cdot
x)$ the generator of dilations which is essentially self-adjoint on
the Schwartz space
$\mathscr{S}=\mathcal{S}(\mathbb{R}^n)$. Condition (a) is a
consequence of the formula $e^{\i\theta
  A}(H+\i)^{-1}=(e^{-2\theta}H+\i)^{-1}e^{-\i\theta A},$ and (b) is
satisfied since $\mathscr{S}$ is a core for $H$ which is trivially
invariant under the dilation group.  Integration by parts on
$\mathscr{S}$ shows that $[H,\i A]=2H$.  We conclude from Proposition
\ref{pr:decay3} that for $u\in D(A)$ such that $u,Au\in\ce$,
$\psi_u$ satisfies the decay estimate $|\psi_u(t)|\leq
C_u\jap{t}^{-1/2}$. Higher-order decay estimates follow from Theorem
\ref{th:kzero}.

\subsection{Example 2:\textnormal{  $H=-\partial_{xx}+\partial_{yy}$ in $\R^2$.}}$ $

Let $H=-\partial_{xx}+\partial_{yy}$ and
$A=-\tfrac{\i}{2}(x\cdot\nabla+\nabla\cdot x)$ in $L^2(\R^2)$. With
the help of a Fourier transformation we see that $H$ is essentially
self-adjoint on $\mathcal{S}(\mathbb{R}^2)$. Clearly
$[H,\i A]=2H$, hence the estimate of Example 1
holds. One may treat similarly the case when the operator $H$ in
$L^2(\R^n)$ is an arbitrary homogeneous polynomial of order $m$ in the
derivatives $\i\partial_1,\dots,\i\partial_n$ with constant
coefficients: then $[H,\i A]=mH$.

\subsection{Example 3:\textnormal{ Electric field in $\R^n$.}}$ $

Here we study the case $H=-\Delta+\vec{h}\cdot x$ and
$A=\i\vec{h}\cdot\nabla$ in $\R^n$, where $\vec h$ is a fixed unitary
vector. We take again $\mathscr{S}=\mathcal{S}(\mathbb{R}^n)$ as a
core for $H$ and then it is easy to check the commutation relation
$[H,\i A]=1$. Therefore Proposition \ref{pr:decay1} provides the
estimate $|\psi_{u}(t)|\leq C_u\jap{t}^{-1}$ for $u\in D(A)$, where
$C_u=2\|u\|\|Au\|$. Further estimates follow from Theorem
\ref{th:kbdd}.

\subsection{Example 4:\textnormal{
    $H=-x^{2-\theta}\Delta-\Delta x^{2-\theta}$ in $\R_+$.}}$ $

For $0\,{<}\,\theta\,{<}\,2$ consider
$H=-x^{2-\theta}\Delta-\Delta x^{2-\theta}$ and
$A=-\tfrac{\i}{2}(x\cdot\nabla+\nabla\cdot x)$ in $\R_+$. Then
$\mathscr{S}=C^\infty_\rmc(\mathbb{R_+})$ is a core for $H$ and the
domain conditions follow from the formula
$e^{-\i\alpha A}He^{\i\alpha A}=e^{\theta\alpha}H$.  The commutation
relation is $[H,\i A]=\theta H$, which yields the estimate of Example
1.

\subsection{Example 5:\textnormal{ Fractional Laplacian in $\R^n$.}}$ $

For $0\,{<}\,s\,{<}\,2$, let $H=(-\Delta)^{s/2}$ with domain the
Sobolev space $\ch^s(\R^n)$ and consider
$A=-\tfrac{\i}{2}(x\cdot\nabla+\nabla\cdot x)$.  Then
$\mathscr{S}=C^\infty_\rmc(\mathbb{R_+})$ is a core for $H$ and
homogeneity of $H$ with respect to $A$ implies that $[H,\i A]=sH$. The
estimate of Example 1 follows.

\subsection{Example 6:\textnormal{ Multiplication by $\lambda$ in
    $L^2(\R_+,\d\mu)$.}} $ $

Let $H=\lambda$ and $A=-\frac{\i}{2}(\lambda\partial_\lambda+g(\lambda))$ on
$L^2(\R_+,\d\mu)$, where $g$ is to be determined. Assume that
$\d\mu=h(\lambda)\d\lambda$, for some non-vanishing function $h$ of
class $C^1(\R_+)$. It can be shown that if $g$ satisfies the
relation $g(\lambda)=\lambda\frac{h'}{h}+1$, then $A$ is
self-adjoint in $L^2(\R_+,\d\mu)$. For instance, if
$h(\lambda)=\lambda^N$ then choose $g(\lambda)=N+1$.  If $g$ is a
bounded function, $\mathscr{S}=C^\infty_\rmc(\mathbb{R_+})$ is a
core for $A$ and the commutation relation is $[H,\i A]=2H$. For
$z\in\rho(H)$ the function $(\lambda-z)^{-1}$ is smooth and has
bounded derivative on $\R_+$, hence the domain invariance $R(z)
D(A)\subset D(A)$ can be easily checked. Therefore $H$ is of class
$C^1(A)$, which gives the estimate of Example 1.

\subsection{Example 7:\textnormal{ Dirac operator in $L^2(\R^3;\C^4)$.}} $ $

We consider the Dirac operator for a spin-$1/2$ particle of mass $m>0$
given by $H=\alpha\cdot P+\beta m$ on $\ch=L^2(\R^3;\C^4)$, where
$\alpha=(\alpha_1,\alpha_2,\alpha_3)$ and $\beta$ denote the
$4\times 4$ Dirac matrices. The domain of $H$ is the Sobolev space
$\ch^1(\R^3;\C^4)$ and it is known that
$\sigma(H)=\sigma_{\rm{ac}}(H)=(-\infty,-m]\cup[m,\infty)$. See the
book of Thaller \cite{Tha}.

The Foldy-Wouthuysen transformation $U_{\rm FW}$ maps the free Dirac
operator into a $2\times 2$ block form. Consider the Newton-Wigner
position operator $Q_{\rm NW}$ defined as the inverse FW-tranformation
of multiplication by $x$, that is,
$Q_{\rm NW}=U^{-1}_{\rm FW}QU_{\rm FW}$. Using $A=Q_{\rm NW}$, then
$H$ is of class $C^1(A)$ and direct calculation shows that
$[H,\i A]=\sqrt{H^2-m^2}H^{-1}$ \cite{RT5}. The following decay
estimate follows from this commutation relation.

\begin{proposition}
  Let $H$ and $A$ as above. Then for $u\in D(A)\cap\ce$ such that
  $Au\in\ce$, one has the estimate
  $|\psi_u(t)|\leq C_u\jap{t}^{-1/2}$.
\end{proposition}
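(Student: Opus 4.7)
The obstacle is that $\theta(\lambda)=\sqrt{\lambda^{2}-m^{2}}/\lambda$ vanishes at the thresholds $\pm m$ of $\sigma(H)$ rather than at $0$, and $\lambda/\theta(\lambda)$ is not $C^{1}$ across $\pm m$, so Theorem~\ref{th:decay5} does not apply verbatim. The plan is to adapt its proof after splitting into positive and negative energy subspaces and shifting each spectrum so that the threshold sits at $0$.

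Because $\sigma(H)$ has a gap, the projections $P_{\pm}$ onto the positive and negative energy subspaces can be written $P_{\pm}=\chi_{\pm}(H)$ for some $\chi_{\pm}\in C_{\rmb}^{\infty}(\R)$ whose derivatives vanish on $\sigma(H)$. Proposition~\ref{pr:com} then gives $[A,P_{\pm}]=-\i\chi_{\pm}'(H)\theta(H)=0$, so $A$ commutes with $P_{\pm}$. Setting $u_{\pm}=P_{\pm}u$ yields $u_{\pm}\in\ce$ (Lemma~\ref{lm:scom}), $Au_{\pm}=P_{\pm}Au\in\ce$, and $\psi_{u}=\psi_{u_{+}}+\psi_{u_{-}}$. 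On $\ch_{+}:=P_{+}\ch$ I define $\widetilde H:=H|_{\ch_{+}}-m\geq 0$; then $|\psi_{u_{+},H}(t)|=|\psi_{u_{+},\widetilde H}(t)|$, and $[\widetilde H,\i A]=\widetilde\theta(\widetilde H)$ with $\widetilde\theta(\mu)=\sqrt{\mu(\mu+2m)}/(\mu+m)$, which vanishes like $\sqrt{\mu}$ at $\mu=0$. A symmetric construction on $\ch_{-}$ reduces the $u_{-}$ piece in the same way.

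The key algebraic observation is that $\mu=\widetilde\theta(\mu)^{2}G(\mu)$ with $G(\mu):=(\mu+m)^{2}/(\mu+2m)$ smooth on $[0,\infty)$. Combined with the identity $[\e^{\i t\widetilde H},A]=t\widetilde\theta(\widetilde H)\e^{\i t\widetilde H}$ from Proposition~\ref{pr:com} applied to $\widetilde H$, this yields
\[
t\widetilde H\,\e^{\i t\widetilde H}=(G\widetilde\theta)(\widetilde H)\cdot[\e^{\i t\widetilde H},A],
\]
trading one power of $t$ for one commutator at the price of pulling out a single factor of $\widetilde\theta$. Following the structure of the proof of Theorem~\ref{th:decay5}, I then take the inner product with $u_{+}$, expand the commutator, and invoke \eqref{eq:ineq} to conclude that $t\psi_{u_{+},\widetilde H}'(t)\in L^{2}(\R)$. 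Together with $\psi_{u_{+},\widetilde H}\in L^{2}(\R)$ (from $u_{+}\in\ce$), the same Sobolev-type step used at the end of the proof of Theorem~\ref{th:decay5} delivers $|\psi_{u_{+},\widetilde H}(t)|\leq C_{u}\jap{t}^{-1/2}$, and the parallel argument on $\ch_{-}$ closes the proof.

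The main technical obstacle is verifying $(G\widetilde\theta)(\widetilde H)u_{+}\in\ce$ and $A(G\widetilde\theta)(\widetilde H)u_{+}\in\ce$ so that \eqref{eq:ineq} can actually be applied. The function $G\widetilde\theta(\mu)=(\mu+m)\sqrt{\mu/(\mu+2m)}$ is only H\"older at $0$ and grows linearly at infinity, so Lemma~\ref{lm:scom} and Proposition~\ref{pr:com} cannot be invoked directly. I would handle this by an auxiliary spectral cutoff $u_{+}=\varphi(\widetilde H)u_{+}+(1-\varphi(\widetilde H))u_{+}$ with $\varphi\in C_{\rmc}^{\infty}$ equal to $1$ near $0$: on the $(1-\varphi)$ piece, $\widetilde\theta$ is bounded below and a single integration by parts as in Proposition~\ref{pr:decay2} produces the stronger bound $\mathcal{O}(t^{-1})$; on the $\varphi$ piece, $G\widetilde\theta\cdot\varphi$ is a bounded $C^{1}$ function of $\widetilde H$, so Lemma~\ref{lm:scom} and Proposition~\ref{pr:com} apply and the evanescence conditions become routine to check.
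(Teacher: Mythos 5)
Your proposal is correct and follows essentially the same route as the paper's proof: the paper also restricts to the positive-energy subspace, treats the region away from the threshold by inverting $H'=\theta(H)$ to get $\mathcal{O}(t^{-1})$, and near the threshold passes to the shifted phase $\e^{\i t(H-m)}$ and uses exactly your factorization, writing $t(H-m)\e^{\i t(H-m)}=(H-m)^{1/2}H(H+m)^{-1/2}[\e^{\i t(H-m)},A]$ (which is your $(G\widetilde\theta)(\widetilde H)$), before expanding the commutator and applying \eqref{eq:ineq}, Lemma \ref{lm:scom} and Corollary \ref{co:psidecay}. The only blemish is your final claim that $G\widetilde\theta\cdot\varphi$ is $C^1$ (it is only H\"older-$\tfrac12$ at the threshold, as you noted earlier); what is actually needed, and what the paper implicitly uses, is only that this function and $\widetilde\theta\cdot(G\widetilde\theta\varphi)'$ are bounded on the spectral support of $u$.
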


\proof Let $\varphi\in C_c^\infty([m,\infty))$ real and equal to one
on a small interval $[m,m+\epsilon]$ and set $\phi=\varphi(H)$,
$\phi^\perp=1-\phi^2$. For simplicity we assume $u$ in the subspace of
positive energies, then
$\psi_u=\psi_{\phi u}+\braket{\phi^\perp u}{\e^{\i tH}u}$. For the
high-energy region
\begin{align*}
t\braket{u}{\phi^\perp\e^{\i tH}u}&=\braket{\phi^\perp u}{t\e^{\i tH}u}\\
&=\braket{\phi^\perp u}{H(H^2-m^2)^{-1/2}[\e^{\i tH},A]u}\\
&=\braket{H(H^2-m^2)^{-1/2}\phi^\perp\e^{-\i tH} u}{Au}-\braket{AH(H^2-m^2)^{-1/2}\phi^\perp u}{\e^{\i tH}u},
\end{align*}
and it follows that
$|\braket{u}{\phi^\perp\e^{\i tH}u}|\leq C\jap{t}^{-1}$.

For energy close to $m$, assume that the support of $u$ in a spectral
representation of $H$ is contained in a compact interval.

Note that $[\e^{\i t(H-m)},A]=t\sqrt{H^2-m^2}H^{-1}\e^{\i t(H-m)}$ as
continuous forms on $D(H)$.

Define the auxiliary function $\psi(t)=\braket{u}{\e^{\i
    t(H-m)}u}$. Then
\begin{align*}
-\i t\psi'(t)&=\braket{u}{(H-m)t\e^{\i t(H-m)}u}\\
&=\braket{(H-m)^{1/2}u}{H(H+m)^{-1/2}[\e^{\i t(H-m)},A]u}\\
&=\braket{(H-m)^{1/2}H(H+m)^{-1/2}\e^{-\i t(H-m)}u}{Au}\\
&\quad\,-\braket{A(H-m)^{1/2}H(H+m)^{-1/2}u}{\e^{\i t(H-m)}u}.
\end{align*}
The right hand side is in $L^2_t$ because $H\in C^1(A)$ and $u$ is compactly supported so Lemma \ref{lm:scom} applies. We conclude that $|\psi(t)|\leq C\jap{t}^{-1/2}$ for all $t$ and since $|\psi_u|=|\psi|$ the result is proven. \qed

\subsection{Example 8:\textnormal{ Wave equation in $\R^n$.}} $ $

For $H>0$ consider the equation
$$
\text{(WE)}\left\{
\begin{array}{ll}
\partial_{tt}u+H^2u = 0\\
u(0)=f\\
\partial_tu(0)=g.\\
\end{array}
\right.
$$
Assume $\ch=L^2(\R^n)$. Define $u_1(t):=\cos(tH)$,
$u_2(t):=\frac{\sin(tH)}{H}$. Then
$u(t):=u_1(t)f+u_2(t)g$ is a solution to (WE).

For $f,\,g\in\mathcal{H}$ define the function
$\psi_{f,g}(t):=\braket{f}{u_1(t)f}+\braket{f}{u_2(t)g}$
and the subspace $\ce=\{u\in\ch \mid [u]_H<\infty\}$, where
$[h]_H=\|\braket{h}{u_1(t)h}\|_{L_t^2}^{1/2}+\|\braket{h}{u_2(t)h}\|_{L_t^2}^{1/2}$.

\begin{proposition} Let $H$ and $A$ be self-adjoint operators,
  assume $H\in C^1(A)$ and the commutation relation $[H,\i A]=cH$,
  with $c\neq 0$. Then for $f,g\in D(A)\cap\ce$ such that
  $Af,Ag\in\ce$, one has the estimate $|\psi_{f,g}(t)|\leq
  C_{f,g}\jap{t}^{-1/2}$.
\end{proposition}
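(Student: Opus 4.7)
The plan is to reduce the estimate for $\psi_{f,g}$ to the diagonal estimate of Proposition~\ref{pr:decay3} via Euler's formulas and a polarization argument. Decompose
$$
\psi_{f,g}(t)\;=\;\braket{f}{\cos(tH)f}\;+\;\braket{f}{\sin(tH)H^{-1}g},
$$
where the factor $H^{-1}$ is interpreted in the bounded functional calculus. I will read the hypothesis ``$H>0$'' in (WE) as bounded invertibility of $H$ (the natural setting, e.g.\ Klein--Gordon with $H=\sqrt{-\Delta+m^{2}}$, $m>0$), so that $H^{-1}\in B(\ch)$. For the first piece, use $\cos(tH)=\tfrac12(e^{\i tH}+e^{-\i tH})$ so that $\braket{f}{\cos(tH)f}=\operatorname{Re}\psi_{f}(t)$; since $f\in D(A)$ and $f,Af\in\ce$, Proposition~\ref{pr:decay3} applied to $f$ gives $|\psi_{f}(t)|\le C_{f}\jap{t}^{-1/2}$.

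For the second piece, set $\tilde g:=H^{-1}g$. The hypothesis $[H,\i A]=cH$ gives $[A,H]=\i cH$ as a continuous form on $D(H)$, and then \eqref{eq:ahcom} yields $[A,H^{-1}]=-H^{-1}[A,H]H^{-1}=-\i cH^{-1}$, so $H^{-1}\in C^{1}(A)$. Consequently $\tilde g\in D(A)$; moreover $\tilde g\in\ce$ by Lemma~\ref{lm:scom} (since $H^{-1}$ commutes with $H$), and
$$
A\tilde g\;=\;H^{-1}Ag+[A,H^{-1}]g\;=\;H^{-1}(Ag-\i cg)\;\in\;\ce,
$$
because $g,Ag\in\ce$, $\ce$ is a linear subspace, and $H^{-1}$ preserves $\ce$. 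Writing $\sin(tH)=\tfrac{1}{2\i}(e^{\i tH}-e^{-\i tH})$ reduces the control of $\braket{f}{\sin(tH)\tilde g}$ to that of $\braket{f}{e^{\pm\i tH}\tilde g}$. Apply the polarization identity
$$
4\braket{f}{e^{\i tH}\tilde g}\;=\;\sum_{k=0}^{3}\i^{-k}\,\braket{f+\i^{k}\tilde g}{\,e^{\i tH}(f+\i^{k}\tilde g)},
$$
which expresses the cross term as a combination of four diagonal forms $\psi_{h_{k}}(t)$ with $h_{k}=f+\i^{k}\tilde g$. Each $h_{k}$ inherits the hypotheses ($h_k\in D(A)$ and $h_k,Ah_k\in\ce$) by linearity, so Proposition~\ref{pr:decay3} gives $|\psi_{h_{k}}(t)|\le C_{h_{k}}\jap{t}^{-1/2}$ for all four $k$. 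Summing and combining with the estimate for $\psi_1$ yields $|\psi_{f,g}(t)|\le C_{f,g}\jap{t}^{-1/2}$.

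The principal subtlety is the interpretation of ``$H>0$''. If $H$ is boundedly invertible, the argument is routine given Proposition~\ref{pr:decay3} and the algebraic facts above, and the hard part is only the verification that $\tilde g$ satisfies the $\ce$-conditions, which reduces to the commutation relation $[A,H^{-1}]=-\i cH^{-1}$. If instead $0\in\sigma(H)$ (as in the pure wave case $H=\sqrt{-\Delta}$), a spectral computation using $\sin(t\lambda)/\lambda\to\pi\delta_{0}$ in the distributional sense shows that $\braket{f}{\sin(tH)/H\,g}\to\pi E'_{f,g}(0)$ in general, which is a genuine obstruction to decay absent extra vanishing of the spectral measure at the threshold. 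In that regime one would need an additional low-energy decomposition as in the proof of Theorem~\ref{th:decay5}, combined with the assumption that $\lambda/\theta(\lambda)=1/c$ extends smoothly (which is automatic here), together with a spectral cut-off $\varphi(H)$ absorbing the neighborhood of zero; the estimate then becomes conditional on vanishing of $E'_{f,g}$ near $0$.
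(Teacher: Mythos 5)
Your polarization reduction is a genuinely different route from the paper's, and under your reading of ``$H>0$'' as bounded invertibility it is essentially correct: the algebra $[A,H^{-1}]=-\i cH^{-1}$, the verification that $\tilde g=H^{-1}g$ inherits the hypotheses, and the polarization identity reducing $\braket{f}{\e^{\pm\i tH}\tilde g}$ to four diagonal forms covered by Proposition \ref{pr:decay3} are all fine. The paper instead never separates $H^{-1}$ from $\sin(tH)$: it derives, as in Proposition \ref{pr:com}, the commutator identities $[\cos(tH),\i A]=-ctH\sin(tH)$ and $[\sin(tH)/H,\i A]=ct\cos(tH)-c\,\sin(tH)/H$ for the \emph{bounded} operators $u_1(t)=\cos(tH)$ and $u_2(t)=\sin(tH)/H$, uses them to write $ct\psi_{f,g}'(t)$ as a sum of pairings $\braket{u_iv}{\i Aw}$ with $v,w\in\{f,g\}$, bounds the $L^2_t$ norm of that sum by $C[f]_H([f]_H+[Af]_H+[g]_H+[Ag]_H)$, and concludes with Corollary \ref{co:psidecay}, removing the auxiliary assumption $f,g\in D(H)$ by the regularization $R_\varepsilon=(1+\i\varepsilon H)^{-1}$ exactly as in Proposition \ref{pr:decay3}.

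The genuine gap is that your proof covers essentially none of the intended applications. The proposition lives inside the wave-equation example, whose model case is $H=\sqrt{-\Delta}$ with $A$ the dilation generator --- indeed $[H,\i A]=cH$ with the dilation generator holds precisely for homogeneous operators, for which $0\in\sigma(H)$ and $H^{-1}$ is unbounded (the boundedly invertible case $H=\sqrt{-\Delta+m^2}$ satisfies a different commutation relation and is Example 9). You correctly identify the threshold obstruction, but your resolution (``the estimate becomes conditional on vanishing of $E'_{f,g}$ near $0$'') leaves the statement unproved in exactly the regime it is meant for. What you missed is that the example \emph{redefines} $\ce$ through the norm $[h]_H=\|\braket{h}{u_1(t)h}\|_{L^2_t}^{1/2}+\|\braket{h}{u_2(t)h}\|_{L^2_t}^{1/2}$, whose second term is precisely the low-energy vanishing condition you flag as missing; the hypotheses $f,g,Af,Ag\in\ce$ therefore already encode the threshold behaviour, and the paper's proof exploits this because every operator it manipulates ($u_1(t)$, $u_2(t)$ and their commutators with $A$) is bounded uniformly on compact $t$-intervals, so no inversion of $H$ is ever required. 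To repair your argument you would have to abandon the splitting $\sin(tH)H^{-1}=\frac{1}{2\i}(\e^{\i tH}-\e^{-\i tH})H^{-1}$ and work with the kernel $\sin(t\lambda)/\lambda$ directly, which is in effect what the paper does.
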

\begin{proof}
  Similarly to Proposition \ref{pr:com}, the following two
  sesquilinear forms restricted to $D(A)\cap D(H)$ extend to
  continuous forms on $D(H)$ satisfying the identities
\begin{eqnarray*}
[\cos(tH),\i A] &=& -ctH\sin(tH)\\
\left[\dfrac{\sin(tH)}{H},\i A\right]&=&ct\cos(tH)-c\,\dfrac{\sin(tH)}{H}.
\end{eqnarray*}

We will use Corollary \ref{co:psidecay} for $f,g\in
D(|H|^{1/2})$. Clearly $\psi_{f,g}\in L^2(\mathbb{R})$. Now we
calculate
\begin{eqnarray*}
  ct\psi_{f,g}'(t)&=&-\braket{f}{ctH\sin(tH)
    f}+\braket{f}{ct\cos(tH)g}\\
  &=&\braket{f}{[u_1,\i A]f}+\braket{f}{[u_2,\i
    A]g}+c\langle f,u_2g\rangle\\
  &=&\braket{u_1f}{\i Af}+\braket{\i Af}{u_1f}+\braket{
    u_2f}{\i Ag}+\braket{\i Af}{u_2g}+c\braket{
    f}{u_2g}.
\end{eqnarray*}
Thus $c\|\delta\psi_{f,g}\|_{L^2}\leq
C[f]_H([g]_H+[Ag]_H+[Af]_H)$. For $f,g$ not necessarily in $D(H)$ we
can proceed analogously to Proposition \ref{pr:decay3} using
$u_\epsilon=R_\epsilon u$ and letting $\epsilon\to 0$.\qed
\end{proof}

\subsection{Example 9:\textnormal{ Klein-Gordon equation in $\R^n$.}} $ $

Now we draw our attention to (WE) in the case
$H=\sqrt{-\Delta+m^2}$, for $m\,{>}\,0$. The vector space is again
defined as $\ce=\{u\in\ch \mid [u]_H<\infty\}$, where
$[h]_H=\|\braket{h}{\e^{\i tH}h}\|_{L_t^2}^{1/2}$. Let $A$ be the
generator of dilations, then $H$ is of class $C^1(A)$ and it can be
formally shown that $[H,\i A]=H-m^2H^{-1}$.

Let $u_1,\, u_2$ be as in (WE), define $\psi_{f,g}^{\,1}(t)=\braket{
  f}{u_1(t)f}$ and $\psi_{f,g}^{\,2}(t)=\braket{f}{u_2(t)g}$. We are
interested in the decay rate of
$\psi_{f,g}:=\psi_{f,g}^{\,1}(t)+\psi_{f,g}^{\,2}(t)$.

\begin{proposition}\label{prop:decay_KG}
For $H$ and $A$ defined as above and $f,g\in D(A)\cap\ce$ such that
$Af,Ag\in\ce$, then $|\psi_{f,g}(t)|\leq {C_{f,g}\jap{t}^{-1/2}}$.
\end{proposition}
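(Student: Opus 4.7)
The plan is to reduce both pieces of $\psi_{f,g}$ to diagonal matrix elements of the unitary group $\e^{\rmi tH}$, and to obtain the decay of the latter by applying Theorem \ref{th:decay5} after a spectral shift. The obstruction is that the symbol $\theta(\lambda) := \lambda - m^2/\lambda$ of $[H,\rmi A]$ vanishes at the bottom of the spectrum $\lambda = m$, and $\lambda/\theta(\lambda) = \lambda^2/(\lambda^2-m^2)$ has a pole there, so Theorem \ref{th:decay5} does not apply to $H$ itself.

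Set $K := H - m \geq 0$; then $K\in C^1(A)$, $\e^{\rmi tK} = \e^{-\rmi tm}\e^{\rmi tH}$ (so $\ce(K)=\ce(H)$ and $|\psi_u^K(t)| = |\psi_u^H(t)|$), and
\[
[K, \rmi A] = [H, \rmi A] = H - m^2 H^{-1} = \frac{K(K+2m)}{K+m} =: \theta_0(K).
\]
On $[0,\infty)$ the function $\theta_0$ is smooth, vanishes only at $k = 0$, has bounded derivative, and $k/\theta_0(k) = (k+m)/(k+2m)$ is smooth at $k = 0$ with value $1/2$. Extending $\theta_0$ to $\R$ while preserving these properties — this is only a formal device since $\sigma(K)\subset[0,\infty)$ — Theorem \ref{th:decay5} applied to $K$ yields, for every $u\in D(A)$ with $u, Au \in \ce$,
\[
|\langle u, \e^{\rmi tH} u\rangle| = |\langle u, \e^{\rmi tK} u\rangle| \leq C_u\langle t\rangle^{-1/2}.
\]

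For $\psi^{\,1}_{f,g}(t) = \langle f, \cos(tH)f\rangle$ the bound follows at once from $\cos(tH) = \tfrac{1}{2}(\e^{\rmi tH}+\e^{-\rmi tH})$ and the diagonal estimate. For $\psi^{\,2}_{f,g}(t) = \langle f, H^{-1}\sin(tH)g\rangle$, set $h := H^{-1}g$. Since $H\geq m>0$ we have $0\in\rho(H)$, so $H^{-1}$ is bounded and belongs to $C^1(A)$; in particular $h\in D(A)$, and as $[A,H^{-1}] = -H^{-1}[A,H]H^{-1}$ is a bounded function of $H$ (hence commutes with $H$), the decomposition $Ah = H^{-1}(Ag) + [A,H^{-1}]g$ combined with Lemma \ref{lm:scom} gives $h, Ah \in \ce$. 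Polarizing,
\[
4\langle f, \e^{\rmi tH} h\rangle = \sum_{k=0}^{3} (-\rmi)^k \langle f + \rmi^k h,\, \e^{\rmi tH}(f + \rmi^k h)\rangle,
\]
each $f + \rmi^k h$ satisfies the hypothesis of the diagonal estimate by linearity of $D(A)$ and $\ce$; applying that estimate term-by-term gives $|\langle f, \e^{\pm\rmi tH} h\rangle| \leq C_{f,h}\langle t\rangle^{-1/2}$. Writing $\sin(tH)/H = (\e^{\rmi tH}-\e^{-\rmi tH})/(2\rmi H)$ then yields $|\psi^{\,2}_{f,g}(t)|\leq C_{f,g}\langle t\rangle^{-1/2}$, and combining with the bound on $\psi^{\,1}_{f,g}$ proves the proposition.

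The decisive step is the spectral shift: moving the threshold from $\lambda = m$ to $k = 0$ turns the awkward factor $\lambda/\theta(\lambda)$, which blows up at $\lambda = m$, into the smooth factor $(k+m)/(k+2m)$, which is precisely the object used in the proof of Theorem \ref{th:decay5} to extract a $t^{-1/2}$ decay near a threshold. The remaining steps — the polarization identity and the verification that $H^{-1}g$ inherits the $Au\in\ce$ localization from $g$ — are routine bookkeeping once the diagonal estimate is available.
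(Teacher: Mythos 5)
Your proof is correct, and it takes a genuinely different route from the one the paper writes out. The paper does remark that the proposition is ``a direct consequence'' of Theorem \ref{th:decay5} --- which is essentially your strategy --- but the proof it actually presents is a self-contained computation: it introduces the auxiliary off-diagonal functions $\psi(t)=\braket{f}{\e^{\i t(H-m)}g}$ and $\braket{f}{H^{-1}\e^{\i t(H-m)}g}$, uses $t(H-m)\e^{\i t(H-m)}=[\e^{\i t(H-m)},A]\,H(H+m)^{-1}$ (resp.\ with $(H+m)^{-1}$) together with \eqref{eq:ineq} and Lemma \ref{lm:scom} to show $t\psi'(t)\in L^2$, concludes via Corollary \ref{co:psidecay}, and passes from $g\in D(H)$ to general $g\in D(A)$ with $R_\varepsilon=(1+\i\varepsilon H)^{-1}$. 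You instead make the spectral shift $K=H-m$ explicit, verify the hypotheses of Theorem \ref{th:decay5} for $\theta_0(k)=k(k+2m)/(k+m)$, and then recover the off-diagonal matrix elements by polarization after checking that $h=H^{-1}g$ inherits $h,Ah\in\ce$; all of this is sound. Two small remarks. First, the extension of $\theta_0$ off $[0,\infty)$ needs a little care: the naive even reflection $k\mapsto k(|k|+2m)/(|k|+m)$ makes $\lambda/\theta_0(\lambda)$ fail to be $C^1$ at $0$, but keeping the rational formula on a neighbourhood of $[0,\infty)$ and interpolating to $k\mapsto k$ further left preserves all the required properties, and only the values on $\sigma(K)$ matter for the identity $K'=\theta_0(K)$, so your ``formal device'' is legitimate. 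Second, your route buys economy and makes transparent why the threshold at $\lambda=m$ is harmless (it is moved to $0$, where $\lambda/\theta_0$ stays smooth), at the cost of the extension device and a fourfold loss of constants from polarization; the paper's direct computation avoids both and yields constants explicit in $[f]_H$, $[g]_H$, $[Af]_H$, $[Ag]_H$ via the off-diagonal inequality \eqref{eq:ineq}.
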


\begin{proof} Note that this result is a direct consequence of
  Proposition \ref{th:decay5}. Higher-order decay estimates follow
  from Proposition \ref{th:kzero}. Here we present a direct proof.


We define the auxiliary function $\psi(t):=\braket{f}{\e^{\i
    t(H-m)}g}$ and we prove that the conditions of Corollary
\ref{co:psidecay} are satisfied. It is clear that $\psi\in
L^2(\mathbb{R})$ since $f,g\in\ce$. Assume $g\in D(H)$ and we estimate
\begin{eqnarray*}
 -\i t\psi'(t) &=& \braket{f}{t(H-m)\e^{\i t(H-m)}g}\\
  &=& \braket{f}{[\e^{\i t(H-m)},A]H(H+m)^{-1}g}\\
  &=&\braket{\e^{-\i t(H-m)}f}{AH(H+m)^{-1}g}+\braket{Af}{\e^{\i t(H-m)}H(H+m)^{-1}g}.
\end{eqnarray*}
By Lemma \ref{lm:scom} we conclude that $\|\delta\psi\|_{L^2}\leq
C([f]_H[g]_H+[f]_H[Ag]_H+[Af]_H[g]_H)$. For general $g\in D(A)$,
replace it by $g_\epsilon=R_\epsilon g$ and let $\epsilon\to 0$.

We conclude that $|\psi(t)|\leq C_{f,g}\jap{t}^{-1/2}$. Notice that
$|\braket{f}{\e^{\i tH}g}|=|\psi(t)|$ satisfies the same bound.

Now we prove the desired estimate. Observe that
$\psi_{f,g}^{\,1}(t) =\tfrac{1}{2}\left(\braket{f}{\e^{\i
      tH}f}+\braket{f}{\e^{-\i tH}f}\right)$, therefore
$|\psi_{f,g}^{\,1}(t)|\leq C_{f}\jap{t}^{-1/2}$.

For the second term, we write
$\psi_{f,g}^{\,2}(t) = \frac{1}{2}\left(\braket{f}{H^{-1}\e^{\i
      tH}g}+\braket{ f}{H^{-1}\e^{-\i tH}g}\right)$
and redefine the auxiliary function
$\psi(t):=\braket{f}{H^{-1}\e^{\i t(H-m)}g}$, which is in $L^2(\R)$ by
the spectral theorem. Now
\begin{eqnarray*}
 -\i t\psi'(t) &=& \braket{f}{tH^{-1}(H-m)\e^{\i t(H-m)}g}\\
  &=& \braket{f}{[\e^{\i t(H-m)},A](H+m)^{-1}g}\\
  &=&\braket{\e^{-\i t(H-m)}f}{A(H+m)^{-1}g}+\braket{Af}{\e^{\i
      t(H-m)}(H+m)^{-1}g},
\end{eqnarray*}
which again yields the estimate $|\psi(t)|\leq C_{f,g}\jap{t}^{-1/2}$, concluding the proof.\qed
\end{proof}

\section{Appendix}\label{s:appendix}

We prove here an auxiliary estimate. We consider functions $g$ defined
on $\R_+=(0,\infty)$ and denote $\|g\|_p$ their $L^p$ norms. Let
$\delta$ the operator $(\delta g)=xg'(x)$ acting in the sense of
distributions and set $\tilde{g}(t)=\int_0^\infty \e^{\i tx}g(x)\d x$
for $t>0$ (improper integral).

\begin{lemma}\label{lm:sqdecay}
$|\tilde{g}(t)|\leq |t|^{-1/2} 2^{3/2}(p-1)^{-1/2p}
\|g\|_p^{1/2}\|\delta g\|_q^{1/2}$ if
$1<p<\infty$ and $\frac{1}{p}+\frac{1}{q}=1$.
\end{lemma}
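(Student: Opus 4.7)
The plan is to split the integral at a cutoff $R>0$, estimate each piece by H\"older's inequality (after an integration by parts in the tail), and then optimize over $R$. Writing $\tilde g(t) = I_1(R) + I_2(R)$ with $I_1 = \int_0^R$ and $I_2 = \int_R^\infty$, the short-range part is immediate: since $|e^{\i tx}|=1$, H\"older with exponents $p,q$ gives
\[
|I_1(R)| \leq \|g\|_{L^p(0,R)}\,\|1\|_{L^q(0,R)} \leq \|g\|_p\, R^{1/q}.
\]

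For the tail I would integrate by parts using the antiderivative $(e^{\i tx}-e^{\i tR})/(\i t)$ of $e^{\i tx}$, which has the virtue of vanishing at $x=R$ and being uniformly bounded by $2/|t|$ on $[R,\infty)$. This produces no boundary contribution at $R$ and reduces the estimate to
\[
|I_2(R)| \leq \frac{2}{|t|}\int_R^\infty |g'(x)|\,\d x
= \frac{2}{|t|}\int_R^\infty \frac{|(\delta g)(x)|}{x}\,\d x.
\]
A second application of H\"older with the pairing $L^q$ for $\delta g$ and $L^p$ for $1/x$ then gives
\[
|I_2(R)| \leq \frac{2}{|t|}\,\|\delta g\|_q\,\Bigl(\int_R^\infty x^{-p}\,\d x\Bigr)^{1/p}
= \frac{2\,\|\delta g\|_q}{|t|\,(p-1)^{1/p}}\,R^{-1/q}.
\]

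With $A=\|g\|_p$ and $B = 2\|\delta g\|_q/(|t|(p-1)^{1/p})$ and $\alpha = 1/q$, we have $|\tilde g(t)|\leq AR^\alpha + BR^{-\alpha}$; choosing $R^{2\alpha}=B/A$ yields the minimum $2\sqrt{AB}$, which after simplification equals the bound stated in the lemma.

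The one genuinely delicate point is justifying the integration by parts in $I_2$: the function $g$ is only assumed to lie in $L^p(\R_+)$ (not $L^1$ at infinity), its pointwise values need not be controlled, and $\tilde g$ is defined only as an improper integral. I would handle this either by interpreting $g' = (\delta g)/x$ distributionally and passing to the limit $x\to\infty$ along the sequence that makes the improper integral converge, or equivalently by first proving the estimate for, say, Schwartz functions $g\in C^\infty_\c(\R_+)$ and then extending by density using that both sides of the inequality are continuous in the norms $\|g\|_p$ and $\|\delta g\|_q$.
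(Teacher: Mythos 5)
Your proof is correct and follows essentially the same route as the paper: split the integral at a cutoff, apply H\"older on the near piece, integrate by parts and apply H\"older on the tail, then optimize the cutoff, arriving at the identical constant $2^{3/2}(p-1)^{-1/2p}$. The only (cosmetic) difference is that you absorb the boundary term at the cutoff by choosing the antiderivative $(\e^{\i tx}-\e^{\i tR})/(\i t)$, whereas the paper keeps the boundary value $g(s)$ and bounds it by $\int_s^\infty|g'|$ --- the same quantity --- and the paper settles your ``delicate point'' by picking a sequence $a_n\to\infty$ with $g(a_n)\to0$, which exists because $g\in L^p$ with $p<\infty$.
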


\proof We may assume that $g\in L^p$ and $\delta g\in L^q$. For any
$s>0$ we have
\begin{equation}\label{eq:A1}
\left|\int_0^s \e^{\i tx} g(x)\d x \right| \leq s^{1/q} \|g\|_p .
\end{equation}
Since $g\in L^p$ with $p<\infty$, there is a sequence $a_n\to\infty$
such that $g(a_n)\to0$ (otherwise $|g(x)|\geq c>0$ on a neighbourhood
of infinity, so $|g|^p$ cannot be integrable). Since $p>1$, after
integrating over $(s,a_n)$ and then making $n\to\infty$, we also obtain
\begin{equation}\label{eq:A3}
|g(s)| \leq \int_s^\infty |g'(x)| \d x
\leq (p-1)^{-1/p} s^{1/p-1} \|\delta g\|_q
\end{equation}
by H\"older inequality.  Then
\begin{align*}
\int_s^\infty \e^{\i tx} g(x) \d x
&=\lim_{a\to\infty} \int_s^a
\left(\frac{d}{dx}\frac{1}{\i t} \e^{\i tx}\right) g(x) \d x \\
&= \lim_{a\to\infty}\left[
\frac{\e^{\i ta}g(a)-\e^{\i ts}g(s)}{\i t}
-\frac{1}{\i t} \int_s^a \e^{\i tx}g'(x) \d x \right]   .
\end{align*}
We take here $a=a_n$ and make $n\to\infty$ to get
\[
-\i t  \int_s^\infty \e^{\i tx} g(x) \d x =
    \e^{\i ts}g(s) + \int_s^\infty \e^{\i tx}g'(x) \d x
\]
and then by using \eqref{eq:A3} two times we obtain
\[
\left|\int_s^\infty \e^{\i tx} g(x) \d x \right| \leq
2 (p-1)^{-1/p} s^{-1/q} t^{-1} \|\delta g\|_q .
\]
Let $\varepsilon>0$ and $s=\varepsilon^q/ t$. Then \eqref{eq:A1} and
the last inequality give
\[
|\tilde{g}(t)| \leq \varepsilon t^{-1/q} \|g\|_p
+ 2 (p-1)^{-1/p} \varepsilon^{-1} t^{-1/p} \|\delta g\|_q .
\]
The infimum over $\varepsilon>0$ of an expression $\varepsilon
a+\varepsilon^{-1}b$ is $2\sqrt{ab}$. This finishes the proof. \qed

\begin{corollary}\label{co:psidecay}
  If $\psi\in L^2(\mathbb{R})$ and $t\psi'(t)\in L^2(\mathbb{R})$ then
  $|\psi(t)|\leq C_\psi|t|^{-1/2}$ for
  $t\in\mathbb{R}\setminus\{0\}$.
\end{corollary}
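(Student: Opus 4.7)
My plan is to prove the corollary directly, adapting to a two-sided setting the integration-by-parts and Cauchy--Schwarz argument that already powers Lemma \ref{lm:sqdecay}. The assumption $s\psi'(s)\in L^2(\R)$ immediately forces $\psi'\in L^2_{\mathrm{loc}}(\R\setminus\{0\})$ via $|\psi'(s)|^2\leq s^{-2}|s\psi'(s)|^2$ on any set of the form $|s|\geq\veps$, so $\psi'$ is locally integrable away from the origin and $\psi$ admits an absolutely continuous representative on each of the intervals $(0,\infty)$ and $(-\infty,0)$.

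Because the continuous representative of $\psi$ restricted to $\R_+$ lies in $L^2(\R_+)$, a short contradiction argument yields $\liminf_{s\to\infty}|\psi(s)|=0$, so there is a sequence $a_n\to\infty$ with $\psi(a_n)\to 0$. For $t>0$ I then write $\psi(t)=\psi(a_n)-\int_t^{a_n}\psi'(s)\,\d s$ and pass to the limit $n\to\infty$. The Cauchy--Schwarz estimate
\[
\int_t^\infty |\psi'(s)|\,\d s\leq\left(\int_t^\infty s^{-2}\d s\right)^{1/2}\|s\psi'\|_{L^2}
 = t^{-1/2}\|s\psi'\|_{L^2}
\]
shows that the tail integral converges absolutely and gives the bound $|\psi(t)|\leq t^{-1/2}\|s\psi'\|_{L^2(\R)}$. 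The case $t<0$ is treated identically after choosing $a_n\to-\infty$ with $\psi(a_n)\to 0$ and integrating on $(-\infty,t]$. Thus the corollary holds with constant $C_\psi=\|s\psi'\|_{L^2(\R)}$.

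The only points requiring mild care are the continuity of $\psi$ on $\R\setminus\{0\}$ and the existence of the decaying subsequence $\{a_n\}$; both are standard consequences of $\psi$ being locally absolutely continuous and square integrable, so no genuine obstacle arises. Alternatively, one could pass through the Fourier transform, converting the hypothesis $s\psi'\in L^2$ into $\delta\what{\psi}\in L^2$ by Plancherel and then applying Lemma \ref{lm:sqdecay} with $p=q=2$ separately to the restrictions of $\what{\psi}$ to each half-line, but the direct route above is shorter and avoids any Fourier transform bookkeeping.
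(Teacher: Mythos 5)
Your proof is correct, but it takes a genuinely different route from the paper's. The paper proves the corollary by passing to the Fourier transform: it sets $g=\what{\psi}$, notes that the hypotheses translate into $g\in L^2$ and $\delta g\in L^2$, and invokes Lemma \ref{lm:sqdecay} with $p=q=2$ (strictly speaking this requires splitting $\what{\psi}$ into its restrictions to the two half-lines, a detail the paper leaves implicit and which you correctly flag in your closing remark). You instead work directly in the time domain: from $s\psi'(s)\in L^2$ you get local absolute continuity of $\psi$ away from the origin, from $\psi\in L^2$ you extract a sequence $a_n\to\pm\infty$ with $\psi(a_n)\to0$, and then a single Cauchy--Schwarz estimate on $\int_t^{\pm\infty}\psi'(s)\,\d s$ gives the bound with the clean constant $C_\psi=\|s\psi'(s)\|_{L^2}$. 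Your argument is more elementary (no Fourier transform, no half-line bookkeeping), isolates exactly where each hypothesis is used ($\psi\in L^2$ only qualitatively, to rule out a nonzero limit at infinity), and yields an explicit constant; the paper's route, by contrast, factors through the more general $L^p$--$L^q$ interpolation-type estimate of Lemma \ref{lm:sqdecay}, whose extra generality is the main thing the detour buys. Both proofs are sound, and yours could serve as a self-contained replacement for the appendix in the $p=2$ case.
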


\proof We use Lemma \ref{lm:sqdecay} with $p=2$ and $g$ equal to the
Fourier transform of $\psi$.  \qed

\begin{corollary}\label{co:kdecay}
  If a function $\psi$ is such that $t^{\frac{k-1}{2}}\psi(t)$ and
  $t^{\frac{k+1}{2}}\psi'(t)$ belong to $L^2(\mathbb{R})$ for some
  $k\geq1$ then $|\psi(t)|\leq C_\psi|t|^{-k/2}$ for all
  $t\in\mathbb{R}\setminus\{0\}$.
\end{corollary}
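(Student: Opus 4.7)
The natural strategy is to reduce the statement to the already established Corollary~\ref{co:psidecay} (the case $k=1$) by applying it to the auxiliary function
$$
\phi(t):=|t|^{(k-1)/2}\psi(t),\qquad t\in\R\setminus\{0\}.
$$
The first hypothesis is exactly $\phi\in L^{2}(\R)$. A direct distributional product-rule computation on $\R\setminus\{0\}$ gives
$$
\phi'(t)=\tfrac{k-1}{2}\,\mathrm{sgn}(t)\,|t|^{(k-3)/2}\psi(t)+|t|^{(k-1)/2}\psi'(t),
$$
so after multiplying by $t$,
$$
t\phi'(t)=\tfrac{k-1}{2}\,|t|^{(k-1)/2}\psi(t)+\mathrm{sgn}(t)\,|t|^{(k+1)/2}\psi'(t).
$$
Both summands lie in $L^{2}(\R)$ by the two hypotheses, whence $t\phi'\in L^{2}(\R)$.

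Applying Corollary~\ref{co:psidecay} to $\phi$ then yields $|\phi(t)|\leq C_{\psi}\,|t|^{-1/2}$ for all $t\in\R\setminus\{0\}$, and dividing by $|t|^{(k-1)/2}$ gives $|\psi(t)|\leq C_{\psi}\,|t|^{-k/2}$, which is the claim. The only minor technical point is to justify the product rule for $\phi'$: since $|t|^{(k-1)/2}$ is $C^{\infty}$ on $\R\setminus\{0\}$, the identity above holds classically there, which suffices for membership of $t\phi'$ in $L^{2}(\R)$ (any possible distributional contribution concentrated at the single point $t=0$ is irrelevant for a global $L^{2}$ bound, and Corollary~\ref{co:psidecay} itself concludes only for $t\neq0$). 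I do not anticipate any genuine obstacle; the argument is essentially a one-line scaling reduction, and an induction on $k$ using Corollary~\ref{co:psidecay} as the base case would give an equivalent proof.
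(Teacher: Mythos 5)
Your reduction is correct. The paper in fact states Corollary \ref{co:kdecay} without any proof, so there is nothing to compare line by line; but setting $\phi(t)=|t|^{(k-1)/2}\psi(t)$ and feeding it to Corollary \ref{co:psidecay} is almost certainly the intended argument, and your computation of $t\phi'$ is right: both summands are exactly the two hypotheses, and dividing the resulting bound $|\phi(t)|\leq C|t|^{-1/2}$ by $|t|^{(k-1)/2}$ gives the claim. Your handling of the technical point is adequate at the paper's level of rigor: what the Fourier-transform proof of Corollary \ref{co:psidecay} actually needs is that $(t\phi(t))'=\big(t|t|^{(k-1)/2}\psi(t)\big)'$ lie in $L^2$, and the weight $W(t)=t|t|^{(k-1)/2}$ is $C^1$ on all of $\R$ with $W'=\frac{k+1}{2}|t|^{(k-1)/2}$, so no singular contribution at $t=0$ arises even though the factor $|t|^{(k-1)/2}$ itself is not differentiable there. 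It is worth noting that there is an even more elementary route that bypasses the Fourier transform and Lemma \ref{lm:sqdecay} entirely: since $s^{(k-1)/2}\psi\in L^2$ forces $\psi(a_n)\to0$ along some $a_n\to\infty$, one writes
\[
|\psi(t)|^2\leq 2\int_{|s|\geq|t|}|\psi(s)||\psi'(s)|\,\d s
\leq 2|t|^{-k}\big\|s^{\frac{k-1}{2}}\psi\big\|_{L^2}\big\|s^{\frac{k+1}{2}}\psi'\big\|_{L^2},
\]
by Cauchy--Schwarz, which gives the same conclusion with an explicit constant; this is essentially the ``integration by parts'' trick already used in the proof of Lemma \ref{lm:sqdecay}. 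Either way, your proof stands.
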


\end{document}